\newcommand{\B}{\mathbf{B}}
\newcommand{\U}{\mathbf{U}}
\newcommand{\T}{\mathbf{T}}
\newcommand{\G}{\mathbf{G}}
\newcommand{\Para}{\mathbf{P}}
\newcommand{\Levi}{\mathbf{L}}
\newcommand{\Y}{\mathbf{Y}}
\newcommand{\Gtilde}{\mathbf{\tilde{G}}}
\newcommand{\Ltilde}{\mathbf{\tilde{L}}}
\newcommand{\C}{\operatorname{C}}
\newcommand{\N}{\mathbf{N}}
\newcommand{\n}{\mathbf{n}}
\newcommand{\x}{\mathbf{x}}
\newcommand{\h}{\mathbf{h}}
\newcommand{\m}{\mathbf{m}}
\newcommand{\Z}{\operatorname{Z}}
\newcommand{\End}{\operatorname{End}}
\newcommand{\Ind}{\operatorname{Ind}}
\newcommand{\Res}{\operatorname{Res}}
\theoremstyle{remark}
\theoremstyle{definition}
\newtheorem{definition}{Definition}
\newtheorem{remark}[definition]{Remark}
\theoremstyle{plain}
\newtheorem{theorem}[definition]{Theorem}
\newtheorem{lemma}[definition]{Lemma}
\newtheorem{proposition}[definition]{Proposition}
\newtheorem{assumption}[definition]{Assumption}
\begin{document}

\title{On the Bonnafé--Dat--Rouquier Morita equivalence}
\author{Lucas Ruhstorfer}
\date{\today}

\address{School of Mathematics and Natural Sciences University of Wuppertal, Gau\ss str. 20, 42119 Wuppertal, Germany}
\email{ruhstorfer@uni-wuppertal.de}
\keywords{Jordan decomposition, groups of Lie type}

\subjclass[2010]{20C33}

\maketitle

\begin{abstract}
We prove that the cohomology group of a Deligne--Lusztig variety defines a Morita equivalence in a case which is not covered by the argument in \cite{Dat}, specifically we consider the situation for semisimple elements in type $D$ whose centralizer has non-cyclic component group. Some arguments use considerations already present in an unpublished note by Bonnafé, Dat and Rouquier. \end{abstract} 

\section*{Introduction}
Let $\G$ be a connected reductive group with Frobenius endomorphism $F: \G \to \G$ defining an $\mathbb{F}_q$-structure, where $q$ is a power of a prime $p$. Let $\G^\ast$ be a group dual to $\G$ with dual Frobenius $F^\ast: \G^\ast \to \G^\ast$. Let $\ell$ be a prime number different from $p$ and $(\mathcal{O},K,k)$ an $\ell$-modular system as in \cite[Section 2.A]{Dat}. Let $s\in (\G^\ast)^{F^\ast}$ be a semisimple element of $\ell'$-order and $\Levi^\ast$ be the minimal $F^\ast$-stable Levi subgroup containing $\C^\circ_{\G^\ast}(s)$ and $\Levi$ be the Levi subgroup of $\G$ dual to $\Levi^\ast$. In addition, let $e_s^{\Levi^F} \in \mathrm{Z}(\mathcal{O} \Levi^F)$ be the central idempotent associated to $s\in (\Levi^\ast)^{F^\ast}$, see \cite[Theorem 9.12]{MarcBook}, and $\mathbf{N}^F$ denotes $\mathrm{N}_{\G^F}(\Levi^F, e_s^{\Levi^F})$.

Let $\Para= \Levi \U$ be a Levi decomposition in $\G$ and denote by $\Y_\U^\G$ the associated Deligne--Lusztig variety on which $ \G^F$ acts on the left and $ \Levi^F$ acts on the right. Denote $d:= \mathrm{dim}(\Y_\U^\G)$ and let $H^d_c(\Y_\U^\G,\mathcal{O})$ be the $d$th $\ell$-adic cohomology group with compact support of $\Y_\U^\G$. Suppose that the $\mathcal{O} \G^F$-$\mathcal{O}\Levi^F$-bimodule $H_c^d(\Y_\U^\G,\mathcal{O}) e_s^{\Levi^F}$ extends to an $\mathcal{O} \G^F$-$\mathcal{O}\N^F$-bimodule. Then in \cite[Theorem 7.7]{Dat} the authors prove the astonishing result that this extended bimodule induces a Morita equivalence between $\mathcal{O} \mathbf{N}^F e_s^{\Levi^F}$ and $\mathcal{O} \G^F e_s^{\G^F}$. This strengthens an earlier theorem of Bonnafé and Rouquier proving a conjecture of Broué \cite{Broue}.

Note however that \cite[Theorem 7.7]{Dat} was announced without the assumption that this bimodule extends. As the proof of \cite[Proposition 7.3]{Dat} is incomplete, this assumption is necessary at the moment. One case where this assumption is easily seen to be satisfied is when the quotient subgroup $\mathbf{N}^F/ \Levi^F$ is cyclic, see Lemma \ref{assumption} below.

Our aim in this note is to remove this technical assumption and therefore extend the results of \cite[Theorem 7.7]{Dat}. From now on we assume that $\G$ is a simple algebraic group. In this case, the quotient group $\mathbf{N}^F/\Levi^F$ embeds into $\mathrm{Z}(\G)^F$. Therefore, a non-cyclic quotient can only appear if $\G$ is simply connected and $\G^F$ is of type $D_n$ with even $n\geq 4$. Hence we focus on this situation and prove the following statement.

\begin{proposition}\label{ext}
Let $\G$ be a simple, simply connected algebraic group such that $\G^F$ is of type $D_n$ with even $n \geq 4$. If $\ell \nmid (q^2-1)$ then the $ \mathcal{O} \G^F$-$\mathcal{O} \Levi^F$-bimodule $H_c^d(\Y_\U^\G,\mathcal{O}) e_s^{\Levi^F}$ extends to an $\mathcal{O} \G^F$-$\mathcal{O} \N^F$-bimodule.
\end{proposition}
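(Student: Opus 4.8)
The plan is to reduce the statement to the cyclic case handled by Lemma~\ref{assumption} by passing to a regular embedding, the condition $\ell\nmid q^2-1$ being precisely what makes that embedding invisible to $\ell$-modular representation theory. First I would dispose of the cases already covered: as recalled before the statement, $E:=\N^F/\Levi^F$ embeds into $\Z(\G)^F$, so if $E$ is cyclic the bimodule extends by Lemma~\ref{assumption} and we may assume $E$ is non-cyclic. Then $\G$ is simply connected of type $D_n$ with $n$ even, and moreover $p$ is odd, since the scheme-theoretic centre of a simply connected group of type $D_n$ has trivial group of $\mathbb{F}_q$-points when $p=2$; also $F$ must act trivially on $\Z(\G)\cong(\mathbb{Z}/2\mathbb{Z})^2$ (a graph-twisted Frobenius leaving only a cyclic fixed-point group), so $E=\Z(\G)^F\cong(\mathbb{Z}/2\mathbb{Z})^2$. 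Since $2\mid q^2-1$, the hypothesis forces $\ell\neq2$; in particular $E$ is an $\ell'$-group, $\ell\nmid|\Z(\G)^F|$, and $\ell$ divides neither $q-1$ nor $q+1$.

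Next I would introduce a regular embedding $\G\hookrightarrow\Gtilde$: a closed $F$-equivariant embedding into a connected reductive group with $[\Gtilde,\Gtilde]=\G$ and $\Z(\Gtilde)$ a torus, necessarily of rank $\geq2$ so that it can contain $(\mathbb{Z}/2\mathbb{Z})^2=\Z(\G)$. Then the indices $|\Gtilde^F/\G^F|$, $|\Ltilde^F/\Levi^F|$ and the corresponding dual ones all divide products of factors $q\pm1$, hence are prime to $\ell$. On the dual side $\Gtilde^\ast\twoheadrightarrow\G^\ast$ has a central torus as kernel, so semisimple elements of $\Gtilde^{\ast F}$ have connected centralisers. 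Choosing a semisimple $\ell'$-element $\tilde s\in\Gtilde^{\ast F}$ lifting $s$ compatibly with Clifford theory, letting $\Ltilde^\ast$ be the minimal $F$-stable Levi of $\Gtilde^\ast$ containing $\C_{\Gtilde^\ast}(\tilde s)$ and $\Ltilde\supseteq\Levi$ its dual, the relative Weyl group of $\C_{\Gtilde^\ast}(\tilde s)$ is trivial, so $\N_{\Gtilde^F}(\Ltilde^F,e_{\tilde s}^{\Ltilde^F})=\Ltilde^F$. Hence the extension hypothesis of \cite[Theorem~7.7]{Dat} is vacuous for $\Gtilde$, and $\tilde M:=H^d_c(\Y^{\Gtilde}_\U,\mathcal{O})e_{\tilde s}^{\Ltilde^F}$ is genuinely an $\mathcal{O}\Gtilde^F$-$\mathcal{O}\Ltilde^F$-bimodule inducing a Morita equivalence between $\mathcal{O}\Ltilde^F e_{\tilde s}^{\Ltilde^F}$ and $\mathcal{O}\Gtilde^F e_{\tilde s}^{\Gtilde^F}$. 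As Deligne--Lusztig induction commutes with the regular embedding, $R\Gamma_c(\Y^{\Gtilde}_\U,\mathcal{O})\cong\mathcal{O}\Gtilde^F\otimes_{\mathcal{O}\G^F}R\Gamma_c(\Y^{\G}_\U,\mathcal{O})\otimes_{\mathcal{O}\Levi^F}\mathcal{O}\Ltilde^F$ in the derived category of bimodules; cutting by idempotents shows that $H^\bullet_c(\Y^{\G}_\U,\mathcal{O})e_s^{\Levi^F}$ is concentrated in degree $d$ and that $M:=H^d_c(\Y^{\G}_\U,\mathcal{O})e_s^{\Levi^F}$ is recovered from $\tilde M$ by applying idempotents, in particular a direct summand of $\tilde M$ as a bimodule over $\mathcal{O}\G^F$ and $\mathcal{O}\Levi^F$.

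It remains to perform the descent. The bimodule $M$ is $\N^F$-stable (see \cite{Dat}), so, writing $b=e_s^{\Levi^F}$, the obstruction to extending $M$ to an $\mathcal{O}\G^F$-$\mathcal{O}\N^F$-bimodule is a class $\omega\in H^2\!\big(E,\Z(\mathcal{O}\Levi^F b)^\times\big)$, with $E$ acting through conjugation by $\N^F$. Because $E$ is an $\ell'$-group and $1+J\!\big(\Z(\mathcal{O}\Levi^F b)\big)$ is uniquely $\ell'$-divisible, $\omega$ lies in $H^2(E,\mu)$ for a finite group $\mu$ of order prime to $\ell$ (the roots of unity in $\Z(\mathcal{O}\Levi^F b)$). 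Applying Lemma~\ref{assumption} to each of the three intermediate subgroups $\Levi^F\subsetneq\N^F_i\subsetneq\N^F$ with $\N^F_i/\Levi^F$ cyclic shows that $\omega$ restricts to $0$ on every cyclic subgroup of $E$; by the universal-coefficient description of $H^2\!\big((\mathbb{Z}/2\mathbb{Z})^2,\mu\big)$ this kills the $\operatorname{Ext}^1$-component of $\omega$, but leaves at most a Schur-multiplier (commutator-pairing) contribution, which is not detected on cyclic subgroups.

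Killing this last contribution is the main obstacle: it amounts to checking that the commutator of the two automorphisms of $M$ that extend the module structure along generators of $E$ is equal, on the nose, to the action of the corresponding commutator in $\Levi^F$. I would verify this by an explicit computation of those two automorphisms inside $\tilde M$, using the realisation of the $E$-action by elements of $\Gtilde^F$ together with the comparison isomorphisms between the cohomologies of the Deligne--Lusztig varieties $\Y^{\G}_{\U'}$ attached to the various parabolic subgroups of $\G$ with Levi $\Levi$ — available here as genuine isomorphisms of bimodules, the cohomology being concentrated in one degree, and forming a transitive system — which should reduce the commutator in question to one between commuting elements. That the relevant elements of $\Gtilde^F$ exist and behave well, i.e. that replacing $\G^F$ by $\Gtilde^F$ perturbs neither the $\ell$-blocks in sight nor these isomorphisms, is exactly where the hypothesis $\ell\nmid q^2-1$ enters.
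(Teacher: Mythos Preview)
Your reduction to the non-cyclic case and your cohomological reformulation of the extension problem as a class $\omega\in H^2(E,\mu)$ with $E\cong(\mathbb{Z}/2\mathbb{Z})^2$ are both sound, and you correctly identify that the restriction-to-cyclic-subgroups argument leaves precisely the Schur-multiplier contribution untouched. The gap is in your final paragraph, where you propose to kill this contribution. You say you would compute the commutator of the two extending automorphisms ``inside $\tilde M$'' using the comparison isomorphisms between the cohomologies $H^d_c(\Y^{\G}_{\U'},\mathcal{O})$ for varying parabolics with Levi $\Levi$, asserting that these isomorphisms ``form a transitive system''. But this transitivity is exactly the content of \cite[Proposition~7.3]{Dat}, whose proof the present paper explicitly declares incomplete in its introduction; the fact that the cohomology is concentrated in a single degree does not by itself yield a coherent system of isomorphisms, because each comparison map is constructed via an auxiliary variety and there is no a priori reason for the composites to agree. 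So your proposal, at the decisive step, circles back to the very gap the paper is written to bypass. Your invocation of $\ell\nmid(q^2-1)$ is also rather indirect: you use it only to make the regular embedding $\ell'$-transparent, but that does not obviously bear on the commutator computation you still owe.

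The paper's proof proceeds along entirely different lines and never attempts to establish such a transitive system. Instead it carries out a detailed classification of the semisimple elements $s$ with $\N^F/\Levi^F$ non-cyclic, pinning down the shape of $\Levi$ so precisely that one can exhibit explicit lifts $\n_1,\n_2\in\N_{\G}(\T_0)^F$ of generators of $E$ which literally commute in $\G$, with $\n_1$ moreover centralising a large subgroup $\G_2\leq\Levi$. The hypothesis $\ell\nmid(q^2-1)$ enters concretely: it forces a small central torus $\T_1^F$ (of order $(q\pm1)^2$) to have $\ell'$-order, so that on any indecomposable summand in a unipotent block $\T_1^F$ acts trivially; one can then let $\n_1$ act trivially as well, and a short Clifford-theoretic case analysis (Proposition~\ref{main}, supported by the abstract extension criterion of Lemma~\ref{module}) shows every $\N^F$-invariant indecomposable bimodule extends. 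This replaces your unproven transitivity by an explicit construction of compatible extensions.
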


The proof combines group theoretic descriptions of the relevant Levi subgroups and Clifford theoretic arguments tailored to this situation. Unfortunately, the restriction on $\ell$ seems to be necessary with the approach presented, see proof of Proposition \ref{main}.

Using an argument from the unpublished note \cite{Note} we show that the extended bimodule induces a Morita equivalence and from this we can deduce the validity of \cite[Theorem 7.7]{Dat} in this case.

\begin{theorem}\label{final}
Suppose that $\G$ is a simple algebraic group. If $ \ell \nmid (q^2-1)$ or if $\mathbf{N}^F/ \Levi^F$ is cyclic then the complex $G \Gamma_c(\Y_\U^\G,\mathcal{O})^{\mathrm{red}} e_s^{\Levi^F}$ of $\mathcal{O} \G^F$-$\mathcal{O} \Levi^F$-bimodules extends to a complex $C$ of $\mathcal{O} \G^F$-$\mathcal{O} \mathbf{N}^F$-bimodules. There exists a unique bijection $b \mapsto b'$ between blocks of $\mathcal{O} \G^F e_s^{\G^F}$ and blocks of $\mathcal{O} \mathbf{N}^F e_s^{\Levi^F}$ such that $ b C \cong C b'$. For each block $b$ the complex $b C b'$ induces a splendid Rickard equivalence between $b$ and $b'$. Similarily, the bimodule $H^d(b C b')$ induces a Morita equivalence between $b$ and $b'$. The complex $bC b'$ induces an isomorphism of the Brauer categories of $k \G^F b$ and $k \G^F b'$. In particular, $b$ and $b'$ have the same defect group.
\end{theorem}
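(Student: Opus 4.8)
The plan is to leverage Proposition~\ref{ext} together with the already-established \cite[Theorem 7.7]{Dat}, which applies once the extension of the bimodule is known. Concretely, in the case $\mathbf{N}^F/\Levi^F$ is cyclic the extension holds by Lemma~\ref{assumption}, and in the remaining case (where, as noted, $\G$ must be simply connected of type $D_n$ with $n$ even) we invoke Proposition~\ref{ext} under the hypothesis $\ell \nmid (q^2-1)$. So in either case the hypothesis of \cite[Theorem 7.7]{Dat} is satisfied, and that theorem gives the existence of the extended complex $C$ of $\mathcal{O}\G^F$-$\mathcal{O}\mathbf{N}^F$-bimodules, the block bijection $b \mapsto b'$ with $bC \cong Cb'$, and the splendid Rickard equivalence between $b$ and $b'$ induced by $bCb'$. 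The Morita equivalence statement for $H^d(bCb')$ is part of the same package (it is the cohomology of the top degree of the Rickard complex, which concentrates there up to the reduction), as is the equivalence on Brauer categories and the consequent equality of defect groups.

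The one point that is \emph{not} immediately covered by \cite[Theorem 7.7]{Dat} is the claim that the reduced complex $G\Gamma_c(\Y_\U^\G,\mathcal{O})^{\mathrm{red}} e_s^{\Levi^F}$ itself extends to a complex of $\mathcal{O}\G^F$-$\mathcal{O}\mathbf{N}^F$-bimodules. Here I would follow the argument alluded to from the unpublished note \cite{Note}: once one knows that the single bimodule $H_c^d(\Y_\U^\G,\mathcal{O}) e_s^{\Levi^F}$ extends, one uses that the reduced complex $G\Gamma_c^{\mathrm{red}}$ is, in each relevant degree, built functorially from this cohomology together with the $\Levi^F$-action, and that the obstruction to extending a complex (as opposed to a single module) to $\mathbf{N}^F$ lives in a group cohomology set $H^2(\mathbf{N}^F/\Levi^F, \mathcal{O}^\times)$ which vanishes (or is controlled) precisely because the extension of the degree-$d$ term trivializes it. More precisely: the action of $\mathbf{N}^F/\Levi^F$ on the set of isomorphism classes of indecomposable summands of the complex is determined by its action on $H^d$, so extending $H^d$ as a bimodule forces a compatible extension of the whole reduced complex, uniquely up to the choices parametrized by $\Hom(\mathbf{N}^F/\Levi^F,\mathcal{O}^\times)$.

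The main obstacle is thus this lifting of the extension from the single top-cohomology bimodule to the whole reduced Rickard complex. The key inputs are: (i) the reduced complex $G\Gamma_c^{\mathrm{red}}$ has a distinguished form — it is a bounded complex of $p$-permutation bimodules, unique up to homotopy, whose terms other than the endpoints are projective — so that automorphisms of the complex up to homotopy are governed by a small group; (ii) the bimodule $H_c^d(\Y_\U^\G,\mathcal{O})e_s^{\Levi^F}$ occurs as a direct summand in the top term of this complex, so its $\mathbf{N}^F$-stability (which is automatic, since $\mathbf{N}^F$ normalizes all the relevant data) together with the chosen extension pins down the $2$-cocycle controlling the extension of the complex. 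One then argues as in \cite{Note} that this cocycle is a coboundary. Having extended the complex, the block-theoretic consequences — the bijection $b \mapsto b'$, the splendidness of $bCb'$, the Morita equivalence via $H^d(bCb')$, and the isomorphism of Brauer categories with equality of defect groups — all follow from \cite[Theorem 7.7]{Dat} and the standard theory of splendid Rickard equivalences (a splendid Rickard equivalence induces an isomorphism of Brauer pairs and hence preserves defect groups, and its top cohomology induces a Morita equivalence since the complex is, up to the two endpoints, homotopy equivalent to a complex of projectives).
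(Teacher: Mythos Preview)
Your first paragraph has the right skeleton --- split into the cyclic case (Lemma~\ref{assumption}) and the type $D_n$ case (Proposition~\ref{ext}), obtain the extension of $H_c^d(\Y_\U^\G,\mathcal{O})e_s^{\Levi^F}$, then invoke \cite{Dat}. But you skip the step that the paper actually regards as essential: Proposition~\ref{Morita}. The paper does \emph{not} pass directly from ``the bimodule extends'' to ``\cite[Theorem 7.7]{Dat} applies''. Rather, it first uses Proposition~\ref{Morita} (whose proof comes from the unpublished note \cite{Note}) to show that the extended bimodule induces a Morita equivalence between $\mathcal{O}\N^F e_s^{\Levi^F}$ and $\mathcal{O}\G^F e_s^{\G^F}$; this is the conclusion of \cite[Theorem~7.6]{Dat}, and only then does the implication \cite[Theorem~7.6]{Dat} $\Rightarrow$ \cite[Theorem~7.7]{Dat} (which is valid in \cite{Dat}) give the full statement about the complex, the block bijection, splendid Rickard equivalence, Brauer categories, and defect groups. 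Your proposal treats \cite[Theorem 7.7]{Dat} as a black box whose only missing hypothesis is the extension; the paper's position is that the gap in \cite[Proposition~7.3]{Dat} also undermines the passage to the Morita equivalence, which is why Proposition~\ref{Morita} is proved and invoked.

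Your second and third paragraphs are on the wrong track. The paper does \emph{not} argue that the complex $G\Gamma_c(\Y_\U^\G,\mathcal{O})^{\mathrm{red}}e_s^{\Levi^F}$ extends by lifting the extension of $H^d$ through an $H^2(\N^F/\Levi^F,\mathcal{O}^\times)$ obstruction, nor via any claim that ``extending $H^d$ forces a compatible extension of the whole reduced complex''. No such argument appears, and the heuristic you sketch (indecomposable summands of the complex are controlled by those of the top cohomology) is not justified. Instead, once Proposition~\ref{Morita} supplies the Morita equivalence, the extension of the complex and all remaining assertions are obtained by observing that the conclusion of \cite[Theorem~7.6]{Dat} holds, whence \cite[Theorem~7.7]{Dat} follows by the arguments already in \cite{Dat}. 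You should replace your cocycle discussion with an appeal to Proposition~\ref{Morita} followed by the chain \cite[Theorem~7.6]{Dat} $\Rightarrow$ \cite[Theorem~7.7]{Dat}.
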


\section*{Acknowledgement}

Firstly, I would like to thank Cedric Bonnafé and Raphael Rouquier for insightful discussions.
I also thank Marc Cabanes and Gunter Malle for helpful comments on a previous version of this paper.
Finally, Britta Späth for her continuous support.

This material is partly based upon work supported by the NSF under Grant DMS-1440140 while the author was in residence at the MSRI, Berkeley CA. This research was conducted in the framework of the research training group GRK 2240: Algebro-geometric
Methods in Algebra, Arithmetic and Topology, which is funded by the DFG.

\section*{Notation}

We introduce the notation which will be in force until the last section of this paper. Let $\G^\ast$ be a simple, adjoint algebraic group of type $D_{n}$ with $n$ even and $F^{\ast}: \G \to \G$ be a Frobenius endomorphism defining an $\mathbb{F}_q$-structure on $\G^\ast$ such that $\G^{F^\ast}$ is of untwisted type $D_n$. Fix a semisimple element $s \in (\G^\ast)^{F^\ast}$. Then $ \C_{\G^\ast}^\circ(s)$ is an $F^\ast$-stable connected reductive group. Thus, there exists a maximal $F^\ast$-stable torus $\T_0^\ast$ of $\C_{\G^\ast}^\circ(s)$ contained in an $F^\ast$-stable Borel subgroup $\B(s)$ of $\C_{\G^\ast}^\circ(s)$. 

As the dual group $\G$ is of simply connected type, there exists a surjective morphism $\pi: \G \to \G^\ast$ with kernel $\Z(\G)$. We let $\T_0$ be the maximal torus of $\G$ such that $\T_0^\ast= \pi(\T_0)$. Let $F: \G \to \G$ be a Frobenius endomorphism stabilizing $\T_0$ such that $ (\G, \T_0, F)$ is in duality with $ (\G^\ast, \T_0^\ast,F^\ast)$ via the the map $ \pi: \T_0 \to \T_0^\ast$.

We denote by $W$ the Weyl group of $\G$ with respect to $\T_0$ and by $W^\ast$ the Weyl group of $\G^\ast$ with respect to $\T_0^\ast$. The map $\pi$ induces an isomorphism $W \to W^\ast$ which allows $W$ to be identified with $W^\ast$. Under this identification, the anti-isomorphism ${}^\ast: W \to W^\ast$, induced by duality, is then given by inversion, i.e. $w^\ast=w^{-1}$ for all $w \in W$.

The root system of $\G$ can be described more explicitly as follows. Let $\overline{\Phi}$ be a root system of type $B_n$, $n$ even, with base $\{e_1,e_i-e_{i-1}  \mid 2 \leq i \leq n \}$ where $\{ e_i \mid 1 \leq i \leq n \}$ is the canonical orthonormal basis with respect to the standard scalar product on $\mathbb{R}^n$.

Consider the root system $\Phi \subseteq \overline{\Phi}$ consisting of all long roots of $\overline{\Phi}$. Recall that $\Phi$ is a root system of type $D_n$. Let $\overline{\G}$ be the associated simple, simply connected algebraic group defined over $\overline{\mathbb{F}_q}$. By \cite[Section 2.C]{MS} there exists an embedding $\G \hookrightarrow \overline{\G}$ such that the image of $ \T_0$ is a maximal torus of $ \overline{\G}$. In particular, we can identify $\Phi$ with the root system of $\G$ with respect to the torus $\T_0$ and $\overline{\Phi}$ with the root system of $\overline{\G}$ with respect to $\T_0$.

Let $\x_{\bar{\alpha}}(r),\n_{\bar{\alpha}}(r)$ and $\h_{\bar{\alpha}}(r)$ ($r\in \overline{\mathbb{F}_q}$ and $\bar{\alpha} \in \overline{\Phi}$) the Chevalley generators associated to the maximal torus $\overline{\T_0}$ of $\overline{\G}$ as in \cite[Theorem 1.12.1]{GLS}.

Using the embedding of $\G$ into $\overline{\G}$ we obtain a surjective group homomorphism
$$(\overline{\mathbb{F}_q}^\times)^n \to \T_0, \, (\lambda_1,\dots, \lambda_n) \mapsto \prod_{i=1}^n \h_{e_i}(\lambda_i)$$
with kernel $\{ (\lambda_1,\dots, \lambda_n) \in \{ \pm 1\}^n \mid \prod_{i=1}^n \lambda_i=1 \}$. Hence we can write an element $\lambda \in \T_0$ (in a non-unique way) as $\lambda= \prod_{i=1}^{n} \h_{e_i}(\lambda_i)$ for suitable $\lambda_i \in \overline{\mathbb{F}_q}^\times$. For a subset $\mathcal{A} \subset \overline{\mathbb{F}_q}^\times$ with $\mathcal{A}=-\mathcal{A}$ we define 
$$I_{\mathcal{A}}(\lambda):= \{ j \in \{1, \dots, n \} \mid \lambda_j \in \mathcal{A} \}.$$
Note that this does not depend on the choice of the sequence $(\lambda_1,\dots,\lambda_n)$ but only on the element $\lambda \in \T_0$. Let $\omega_4 \in \overline{\mathbb{F}_q}^\times$ be a primitive $4$th root of unity. By \cite[Section 2.C]{MS} we have $\Z(\G)=\langle z_1, z_2 \rangle$, where $z_1=\h_{e_1}(-1)$ and $z_2=\prod_{i=1}^n  \h_{e_i}(\omega_4)$. 

We also fix a tuple $(t_1,\dots,t_n) \in (\overline{\mathbb{F}_q}^\times)^n$ such that $t= \prod_{i=1}^n \h_{e_i}(t_i)\in \T_0$ satisfies $\pi(t)=s$.

Recall that the Weyl group $\overline{W}=\N_{\overline{\G}}(\T_0)/\T_0$ can be identified with the subgroup 
$$\{ \sigma \in  S_{ \{ \pm 1, \dots, \pm n \} } \mid \sigma(-i)=-\sigma(i)  \text{ for all } i=1, \dots,n  \}$$ of $S_{ \{ \pm 1, \dots, \pm n \} }$. By \cite[Proposition 1.4.10]{GeckPfeiffer} it follows that the natural map $W \hookrightarrow \overline{W}$ identifies the Weyl group $W$ as the kernel of the group homomorphism
$$\varepsilon: \overline{W} \to \{ \pm 1 \}, \, \sigma \mapsto (-1)^{|\{i \in \{1,\dots, n \} \mid \sigma(i) < 0 \}|}.$$

Let $F_0 : \G \to \G$ be the Frobenius endomorphism defined by $\x_{\alpha}(t) \mapsto \x_{\alpha}(t^q)$, for $t \in \overline{\mathbb{F}_q}$ and $\alpha \in \Phi$. We let $F_0^\ast: \G^\ast \to \G^\ast$ be defined as the unique morphism satisfying $\pi \circ F_0=F_0^\ast \circ \pi$. Then the triple $(\G^\ast,\T_0^\ast,F_0)$ is in duality with $(\G,\T_0,F_0)$. There exists an element $v \in W$ with preimage $\m_v \in \mathrm{N}_\G(\T_0)$ of $v$ such that $F=\m_v F_0$. Since $(\G,\T_0,F)$ is in duality with $(\G,\T_0,F^\ast)$ there exists some $\m_{v^\ast} \mathrm{N}_{\G^\ast}(\T_0^\ast)$, a preimage of $v^\ast$ in $\mathrm{N}_{\G^\ast}(\T_0^\ast)$, such that $F^\ast=F_0^\ast \m_{v^\ast}$.

\section*{Classifying semisimple conjugacy classes}

Let $\Levi^\ast=\C_{\G^\ast}(\mathrm{Z}^\circ(\C^\circ_{\G^\ast}(s)))$ be the minimal Levi subgroup of $\G^\ast$ containing $\mathrm{C}^\circ_{\G^\ast}(s)$ and $\mathbf{N}^\ast= \mathrm{C}_{\G^\ast}(s) \Levi^\ast$. Let $\Levi$ be a Levi subgroup of $\G$ containing the maximal torus $\T_0$ which is in duality with $\Levi^\ast$. We set $\mathbf{N}$ be the subgroup of $\mathrm{N}_{\G}(\Levi)$ such that $\mathbf{N}/\Levi \cong \mathbf{N}^\ast/ \Levi^\ast$ under the canonical isomorphism between $\mathrm{N}_{\G}(\Levi)/ \Levi $ and $\mathrm{N}_{\G^\ast}(\Levi^\ast)/ \Levi^\ast$ induced by duality. We start by recalling the observation already made in the introduction:

\begin{lemma}\label{assumption}
In order to prove Proposition \ref{ext} we can assume that $\mathbf{N}^F/ \Levi^F $ is non-cyclic.
\end{lemma}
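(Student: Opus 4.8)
The plan is to show that when $\N^F/\Levi^F$ is cyclic the bimodule $M:=H_c^d(\Y_\U^\G,\mathcal{O})e_s^{\Levi^F}$ already extends to an $\mathcal{O}\G^F$-$\mathcal{O}\N^F$-bimodule, so that in proving Proposition \ref{ext} only the non-cyclic case remains. Two preliminary observations are needed. First, by the theorem of Bonnaf\'e--Rouquier (see \cite{Dat}), $M$ is the bimodule of a Morita equivalence between $\mathcal{O}\Levi^F e_s^{\Levi^F}$ and $\mathcal{O}\G^F e_s^{\G^F}$; hence $E:=\End_{\mathcal{O}\G^F\otimes_{\mathcal{O}}(\mathcal{O}\Levi^F)^{\opp}}(M)$ is canonically the common centre $\Z(\mathcal{O}\G^F e_s^{\G^F})=\Z(\mathcal{O}\Levi^F e_s^{\Levi^F})$, a commutative module-finite $\mathcal{O}$-algebra. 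Second, $M$ is $\N^F$-stable: for $n\in\N^F$ conjugation takes the Levi decomposition $\Para=\Levi\U$ to $\Levi\cdot{}^n\U$ and induces an isomorphism $\Y_\U^\G\cong\Y_{{}^n\U}^\G$ of varieties with commuting left $\G^F$- and right $\Levi^F$-actions, equivariant up to the inner automorphism of $\G^F$ given by $n$, while the isomorphism class of the truncation $H_c^d(-,\mathcal{O})e_s^{\Levi^F}$ is independent of the chosen parabolic; so ${}^nM\cong M$ as $\mathcal{O}\G^F$-$\mathcal{O}\Levi^F$-bimodules.

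Now assume $\N^F/\Levi^F=\langle\overline{n}\rangle$ is cyclic of order $m$. I would run the standard argument for extending an invariant bimodule along a cyclic quotient: pick a bimodule isomorphism $\varphi\colon{}^nM\xrightarrow{\sim}M$, compose $m$ copies of it with the tautological identification ${}^{n^m}M\cong M$ afforded by right multiplication by $n^m\in\Levi^F$, and obtain a bimodule automorphism $\Phi\in E^\times$. Replacing $\varphi$ by $\mu\varphi$ with $\mu\in E^\times$ multiplies $\Phi$ by $\mu^m$ (as $E$ is central), so $M$ extends provided $\Phi^{-1}\in(E^\times)^m$ --- which is exactly the vanishing of the obstruction class in $H^2(\N^F/\Levi^F,E^\times)\cong E^\times/(E^\times)^m$ (trivial action, since $\N^F\leq\G^F$ acts by inner automorphisms, hence trivially on the centre). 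Since $m$ divides $|\Z(\G)^F|$, which is a power of $2$ because $\Z(\G)\cong(\mathbb{Z}/2)^2$ in type $D_n$ with $n$ even, and since $\ell$ is odd in the situation of Proposition \ref{ext} ($\ell\nmid q^2-1$ forces $\ell\neq 2$ when $q$ is odd, and $\ell\neq p=2$ when $q$ is even), $m$ is invertible in $\mathcal{O}$; for $\mathcal{O}$ chosen large enough one then extracts the required $m$-th root in $E^\times$ --- on $1+J(E)$ by the binomial series and on $E^\times/(1+J(E))$ using enough $\ell'$-roots of unity in the residue field --- or one simply invokes the treatment of the cyclic case in \cite{Dat}.

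Consequently, whenever $\N^F/\Levi^F$ is cyclic the extension hypothesis of Proposition \ref{ext} is automatic, and it suffices to treat the non-cyclic case; as noted in the introduction this forces $p$ odd and $\N^F/\Levi^F\cong(\mathbb{Z}/2)^2$. The main obstacle I anticipate is not in this lemma --- its two substantive inputs, the $\N^F$-stability of $M$ and the parabolic-independence of the truncated cohomology, are already available, and the divisibility point is the usual ``$\mathcal{O}$ large enough'' hypothesis --- but in the subsequent proof of Proposition \ref{ext}, where the extension must be built by hand for the non-cyclic group $(\mathbb{Z}/2)^2$, the group $H^2\bigl((\mathbb{Z}/2)^2,E^\times\bigr)$ no longer being forced to vanish.
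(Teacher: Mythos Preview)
Your proposal is correct and takes essentially the same approach as the paper: both observe that an $\N^F$-stable bimodule extends over a cyclic quotient of $\ell'$-order, so only the non-cyclic case remains. The paper simply cites \cite[Lemma~10.2.13]{Rouquier3} for this fact, whereas you unpack the obstruction-theoretic argument and additionally invoke the Bonnaf\'e--Rouquier Morita equivalence to identify $E$ with the common centre (making the $m$-th root extraction transparent); this extra input is not needed if one applies Rouquier's lemma to indecomposable summands, but your route is perfectly valid.
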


\begin{proof}
If $\mathbf{N}^F/ \Levi^F$ is cyclic then for instance \cite[Lemma 10.2.13]{Rouquier3} shows that $H_c^d(\Y_\U^\G,\mathcal{O}) e_s^{\Levi^F}$ extends to an $\mathcal{O} \G^F$-$\mathcal{O}\N^F$-bimodule.
\end{proof}

We will now give a more explicit description of the quotient group $\mathbf{N}^F/ \Levi^ F$. By definition we have an injective morphism 
$$\mathbf{N}^\ast/ \Levi^\ast \hookrightarrow \mathrm{C}_{\G}^\ast(s)/\mathrm{C}^\circ_{\G^\ast}(s).$$
As in \cite[Lemma 2.6]{Bonnafe} we consider the morphism 
$$\omega_s: \mathrm{C}_{\G}^\ast(s) \to \mathrm{Z}(\G), \, x \mapsto [y,t]$$
for some $y \in \G$ with $\pi(y)=x$, which by \cite[Corollary 2.8]{Bonnafe}, induces an injection 
$$\mathrm{C}_{\G}^\ast(s)/\mathrm{C}^\circ_{\G^\ast}(s) \hookrightarrow \mathrm{Z}(\G).$$
Thus, we have an embedding $\mathbf{N}/ \Levi \hookrightarrow \mathrm{Z}(\G),$
which induces a map $\mathbf{N}^F/ \Levi^F \hookrightarrow \mathrm{Z}(\G)^F$ on fixed points. As $\mathrm{Z}(\G)^F \cong C^2_{\mathrm{gcd}(2,q-1)}$ we can assume by Lemma \ref{assumption} that $q$ is odd and that $\mathbf{N}^F/ \Levi^F \cong \mathrm{Z}(\G)^F$. Let $W(s)$ (resp. $W^\circ(s)$) be the Weyl group of $\mathrm{C}_{\G^\ast}(s)$ (resp. $\mathrm{C}^\circ_{\G^\ast}(s)$) with respect to $\T_0^\ast$.
By \cite[Remark 2.4]{DM} we have a canonical isomorphism
$$W(s)/W^\circ(s) \to \mathrm{C}_{\G^\ast}(s)/ \mathrm{C}^\circ_{\G^\ast}(s).$$
Recall that $ \T_0$ is contained in a maximal $F$-stable Borel subgroup $\B(s)$ of $ \C_{\G^\ast}^\circ(s)$. Let $\Phi(s)$ be the root system of $ \C_{\G^\ast}^\circ(s)$ with set of positive roots $\Phi^+(s)$ associated to this choice. According to \cite[Proposition 1.3]{Bonnafe} we have $W(s)= A(s) \rtimes W^\circ(s)$, where
$A(s)=\{w \in W(s) \mid w(\Phi^+(s))=\Phi^+(s) \}$. Since $A(s)$ is $F$-stable this shows that the map
$$W(s)^F/W^\circ(s)^{F} \to (\mathrm{C}_{\G\ast}(s))^{F^\ast}/ (\mathrm{C}^\circ_{\G^\ast}(s))^{F^\ast}
$$
is again an isomorphism. As the morphism $\omega_s$ induces an isomorphism 
$$(\mathrm{C}_{\G^\ast}(s)/\mathrm{C}_{\G^\ast}^\circ(s))^{F^\ast} \cong \mathrm{Z}(\G)^{F} \cong C_2 \times C_2$$
we conclude that there exist $w_1^\ast,w_2^\ast \in W^{F^\ast}$ with ${}^{w_1} t=t z_1$ and ${}^{w_2} t=t z_2$. Since $W^{F^\ast}= \mathrm{C}_{W^\ast}(v)$ we have $w_1,w_2 \in \mathrm{C}_W(v)$.

\begin{remark}\label{empty}
The set $I_{ \{ \pm 1, \pm \omega_4 \} }(t)$ is non-empty.
\end{remark}

\begin{proof}
Suppose that $I_{ \{ \pm 1, \pm \omega_4 \} }(t) = \emptyset$. Write ${}^{w_1} t= \prod_{i=1}^n \h_{e_i}(s_i)$ for suitable $s_i \in \overline{\mathbb{F}_q}^\times$. Then ${}^{w_1} t t^{-1} \h_{e_1}(-1)=1$ implies that $s_i t_i^{-1} \in \{ \pm 1 \}$ for all $i$. Now note that 
$$\emptyset=I_{ \{ \pm 1, \pm \omega_4 \} }(t)= I_{ \{\pm 1, \pm \omega_4 \}}(t z_1)=I_{ \{ \pm 1, \pm \omega_4 \}}({}^{w_1} t).$$
Thus, $s_i,t_i \notin \{ \pm 1, \pm \omega_4 \}$ and so $s_i=t_i$ for all $i$. This leads to the contradiction ${}^{w_1} t=t$.
\end{proof}

\begin{lemma}\label{orbit}
In order to prove Proposition \ref{ext} we may assume that $t$ is of the form $t= \prod_{i=1}^n \h_{e_i}(t_i)$ such that $t_i=t_j$ whenever $t_j \in \{ \pm t_i, \pm t_i^{-1} \}$. 
\end{lemma}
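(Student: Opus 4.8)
Here is a plan for the proof of Lemma~\ref{orbit}.

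The plan is to exploit that Proposition~\ref{ext} is insensitive to replacing $s$ by a $(\G^\ast)^{F^\ast}$-conjugate, and also by its image under an $F^\ast$-stable graph automorphism of $\G^\ast$; such an automorphism exists because $\G^{F^\ast}$ is of untwisted type $D_n$. Indeed every object appearing in the statement --- the Levi $\Levi^\ast\supseteq\C^\circ_{\G^\ast}(s)$, the block idempotent $e_s^{\Levi^F}$, the Deligne--Lusztig variety $\Y_\U^\G$, the bimodule $H_c^d(\Y_\U^\G,\mathcal{O})e_s^{\Levi^F}$ and the group $\N^F$ --- is transported equivariantly under such automorphisms, together with its counterpart on the $\G$-side, so the property ``extends to an $\mathcal{O}\G^F$-$\mathcal{O}\N^F$-bimodule'' is preserved. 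Hence it suffices to produce, within the resulting enlarged equivalence class of $s$, one element whose preimage $t$ has the required form.

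On the level of the torus $\T_0$ these moves translate as follows: conjugating $t=\prod_i\h_{e_i}(t_i)$ by a preimage in $\mathrm{N}_{\overline{\G}}(\T_0)$ of an element of $\overline{W}\setminus W$ realises the graph automorphism and acts on $(t_i)$ by an odd signed permutation; conjugating by an $F$-fixed preimage of an element of $W^F=\C_W(v)$ acts by an even signed permutation; and multiplying $t$ by a power of $z_1$ --- recall $\h_{e_i}(-1)=z_1\in\Z(\G)=\ker\pi$ for every $i$, these elements coinciding because $\ker\big((\overline{\mathbb{F}_q}^\times)^n\to\T_0\big)=\{(\lambda_i)\in\{\pm1\}^n\mid\prod_i\lambda_i=1\}$ --- flips the signs of an arbitrary set of coordinates without changing $s$. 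The last move shows the sign pattern of $(t_1,\dots,t_n)$ is irrelevant, so we may assume that inside each class of the relation generated by ``$t_j\in\{\pm t_i,\pm t_i^{-1}\}$'' every coordinate equals a fixed $\lambda$ or $\lambda^{-1}$; classes with $\lambda^4=1$ already satisfy the conclusion. It remains to invert, inside such a class, the coordinates equal to $\lambda^{-1}$. The operation $t_j\mapsto t_j^{-1}$ is conjugation by a signed permutation flipping $e_j$, two such compose to an element of $W$, and these are supplied by (the $F^\ast$-fixed part of) the Weyl group of the $\GL$-factor of $\C^\circ_{\G^\ast}(s)$ built from the $\lambda$- and $\lambda^{-1}$-coordinates, together with $\Z(\G)$ and, if needed, the graph automorphism; a parity obstruction (an odd number of inversions required in a single class) is absorbed by the coordinate with value in $\{\pm1,\pm\omega_4\}$ furnished by Remark~\ref{empty}, since inverting that coordinate alters it by at most a sign, which is free.

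The step I expect to be the main obstacle is keeping everything $F$-equivariant: not every signed permutation of $(t_1,\dots,t_n)$ comes from a move that keeps $s$ inside its $(\G^\ast)^{F^\ast}$-orbit (or that orbit enlarged by the graph automorphism), so the normalisation has to be threaded through the $F^\ast$-fixed subgroups $W^\circ(s)^{F^\ast}\subseteq W(s)^{F^\ast}\subseteq\C_W(v)$ and $\Z(\G)^F$ --- which is exactly why Remark~\ref{empty} and the elements $w_1,w_2\in\C_W(v)$ with ${}^{w_i}t=tz_i$ are put in place beforehand --- and one must check that the coordinate from Remark~\ref{empty} can genuinely be used for the parity correction. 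Granting this bookkeeping, the statement reduces to the elementary fact that the ``eigenvalue data'' of a semisimple element in a group of type $D_n$ can always be brought to grouped normal form.
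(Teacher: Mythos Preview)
Your plan is correct but substantially more elaborate than the paper's argument, because the $F$-equivariance you take pains over is not actually required.

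The paper's proof is only a few lines. It picks $x\in I_{\{\pm1,\pm\omega_4\}}(t)$ (supplied by Remark~\ref{empty}), writes down an explicit product $w\in W$ of coordinate sign-changes $(k,-k)$ together with a factor $(x,-x)^{|K|}$ to force even parity, and observes that ${}^wt$ or ${}^wt\,z_1$ has the desired form (using that every $\h_{e_i}(-1)$ equals $z_1$). It then simply replaces $s$ by $s':={}^ws$.

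The reason an arbitrary $w\in W$ suffices, rather than only $w\in W^F$, is that conjugation by a lift $g\in\mathrm{N}_{\G}(\T_0)$ of $w$ transports the entire package $(\G,F,\T_0,t,\Levi,\mathbf N,v,w_1,w_2)$ isomorphically to the package $(\G,{}^gF,\T_0,{}^wt,{}^g\Levi,{}^g\mathbf N,wvw^{-1},ww_1w^{-1},ww_2w^{-1})$. Proposition~\ref{ext} for the first is literally the same statement as for the second, and every relation invoked afterwards --- in particular $w_i\in\C_W(v)$ --- is carried along. So you need neither $w\in W^F$, nor the graph automorphism, nor any threading through $W^\circ(s)^{F^\ast}$ or $\Z(\G)^F$.

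What your proposal shares with the paper is the free adjustment of signs via $\Z(\G)$ together with the kernel of $(\overline{\mathbb{F}_q}^\times)^n\to\T_0$, and the use of the coordinate in $I_{\{\pm1,\pm\omega_4\}}(t)$ from Remark~\ref{empty} to absorb the parity obstruction so that the normalising element lands in $W$ rather than $\overline W\smallsetminus W$. The only divergence is the pool of allowed conjugations: taking all of $W$ makes the argument elementary, whereas restricting to $F$-equivariant moves leads to exactly the bookkeeping you yourself flag as the main obstacle --- and that obstacle is self-imposed.
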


\begin{proof}
Let $\underline{n}:=\{1,\dots,n \}$. We define the equivalence relation $\sim$ on $\underline{n}$ by saying that $i \sim j$ if $t_j \in \{ \pm t_i, \pm t_i^{-1} \}$. Let $K$ be a set of representatives for the equivalence classes of $\underline{n}$ under $\sim$.

Let $x \in I_{ \{ \pm 1, \pm \omega_4 \} }(t)$. Under the identification of the Weyl group we set
$$w:=(x,-x)^{|K|} \prod_{k \in K } (k,-k) \in W.$$
Since $\h_{e_i}(-1)=\h_{e_1}(-1)=z_1$ for all $i \in \{1, \dots, n \}$ we see that either ${}^w t$ or ${}^w t z_1$ is of the desired form. We let $t' \in \{{}^w t, {}^w t z_1 \}$ be said element. In order to prove Proposition \ref{ext} it is therefore harmless to replace $s$ by the $\G^F$-conjugate $s':={}^w s \in \T_0$. Since $\pi(t')=s'$ this element has a preimage $t' \in \T_0$ which is of the form as announced in the lemma.
\end{proof}

From now on we assume that the element $t$ has the form given in Lemma \ref{orbit}. Recall that 
$$\mathrm{C}_{\G}(t)= \langle \T_0, \x_\alpha(r) \mid \alpha \in \Phi \text{ with }\alpha(t)=1, r\in \overline{\mathbb{F}_q}^\times \rangle.$$
Let $\alpha=e_i \pm e_j \in \Phi$ with $\alpha(t)=1$. Then $ \alpha(t)=(t_i t_j^{ \pm 1})^2=1$ and therefore $t_i= \varepsilon t^{\mp 1}_j$ for some $\varepsilon \in \{ \pm 1 \}$. By assumption on $t$, this implies $t_i=t_j$. In addition, we have $\alpha=e_i-e_j$ if $t_i$ is not a $4$th root of unity. Therefore, the root system $\Phi(t)$ of $\mathrm{C}_{\G}(t)$ is given by 
$$\Phi(t)= \{ \pm e_i \pm e_j \mid  \,i,j \in I_{\{ \pm 1 \} }(t) \} \cup  \{ \pm e_i \pm e_j \mid i, j \in I_{\{ \pm \omega_4 \} } (t) \} \cup \{ e_i - e_j \mid t_i=t_j \} \setminus \{ 0 \}.$$
We write $W(t)$ for the Weyl group of $\mathrm{C}_{\G}(t)$ relative to the torus $\T_0$.

\begin{lemma}
We have $|I_{ \{ \pm 1 \} }(t)|=|I_{  \{ \pm \omega_4 \} }(t) |=1$. 
\end{lemma}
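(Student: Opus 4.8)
The goal is to show that under the running hypotheses — in particular that $\N^F/\Levi^F \cong \Z(\G)^F \cong C_2 \times C_2$, that $q$ is odd, and that $t$ is in the normal form of Lemma \ref{orbit} — the index sets $I_{\{\pm 1\}}(t)$ and $I_{\{\pm\omega_4\}}(t)$ each have exactly one element. By Remark \ref{empty} we already know $I_{\{\pm 1,\pm\omega_4\}}(t) = I_{\{\pm 1\}}(t)\cup I_{\{\pm\omega_4\}}(t)$ is non-empty, so the content is really that \emph{neither} set is empty and that \emph{neither} set has two or more elements.

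First I would pin down $|I_{\{\pm 1\}}(t)| \le 1$ and $|I_{\{\pm\omega_4\}}(t)| \le 1$. Suppose, say, $|I_{\{\pm 1\}}(t)| \ge 2$, so there are distinct $i,j$ with $t_i,t_j \in \{\pm 1\}$; by the normal form of Lemma \ref{orbit} in fact $t_i = t_j$. Then the reflection $(i,j)\in \overline W$ fixes $t$ (since $e_i - e_j$ is a root of $\C_\G(t)$, as recorded in the displayed description of $\Phi(t)$), whereas the element $(i,-i)$ — or rather an appropriate product of sign changes on the coordinates in $I_{\{\pm 1\}}(t)$ — lies in $W(t)$ and, when we look at how it moves $t$ versus how it acts on the relevant central element, we get a second independent way of realizing $z_1$ together with the fact that $(i,j)$ realizes the trivial element; the key point is that the subgroup of $\Z(\G)^F$ realized by $W(s)^F$ acting on $t$ would then fail to be the full Klein four group (the "type $B$" factor on $I_{\{\pm 1\}}(t)$ contributes its full sign-change group, and having $\ge 2$ indices there forces the $\varepsilon$-parity constraint to kill part of it, or conversely produces elements of $W(t)$ that shouldn't be there). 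Concretely I would compute $A(s)$ via \cite[Proposition 1.3]{Bonnafe} using the explicit root system $\Phi(t)$ above: $A(s)$ is built from diagram automorphisms of the components of $\C^\circ_{\G^\ast}(s)$, and one counts that $|I_{\{\pm 1\}}(t)| \ge 2$ would make $\C^\circ_{\G^\ast}(s)$ have a component of type $D_m$ with $m\ge 2$ (or $\ge 3$), whose graph automorphism lies in $W^\circ(s)$ rather than contributing to $A(s)$, so that the image of $W(s)/W^\circ(s)$ in $\Z(\G)$ drops below $C_2\times C_2$, contradicting $\N^F/\Levi^F \cong \Z(\G)^F$. The same argument symmetrically handles $|I_{\{\pm\omega_4\}}(t)| \ge 2$, using that $\pm\omega_4$ give the $D_m$-type component attached to $z_2$.

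Next I would rule out emptiness. If $I_{\{\pm 1\}}(t) = \emptyset$, then no coordinate $t_i$ lies in $\{\pm 1\}$, and I claim there is no element $w_1 \in \mathrm{C}_W(v)$ with ${}^{w_1}t = t z_1$: writing ${}^{w_1}t = \prod \h_{e_i}(s_i)$, the condition ${}^{w_1}t\,t^{-1} = z_1 = \h_{e_1}(-1)$ together with the fact that $w_1$ permutes the $t_i$ up to sign and inversion forces $s_i t_i^{-1} \in \{\pm 1\}$ for all $i$ with exactly one index contributing the $-1$ in an odd way — but then comparing with the $\varepsilon$-parity (the constraint cutting out $W$ inside $\overline W$) and the normal form shows this index would have to sit in $I_{\{\pm 1,\pm\omega_4\}}(t)$ realized \emph{through} a sign change on a $\pm 1$-coordinate, i.e. $I_{\{\pm 1\}}(t)\ne\emptyset$. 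This is exactly parallel to the computation already carried out in the proof of Remark \ref{empty}, just isolating the $z_1$-contribution rather than the combined one; symmetrically, $I_{\{\pm\omega_4\}}(t) = \emptyset$ is incompatible with the existence of $w_2$ realizing $z_2$. Combined with the upper bounds, this gives $|I_{\{\pm 1\}}(t)| = |I_{\{\pm\omega_4\}}(t)| = 1$.

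The main obstacle I anticipate is the bookkeeping in the first step: translating "the image of $W(s)/W^\circ(s)$ in $\Z(\G)$ is all of $C_2\times C_2$" into a precise constraint on the sizes of the two index sets requires carefully identifying, via $\omega_s$ and the explicit generators $z_1 = \h_{e_1}(-1)$, $z_2 = \prod \h_{e_i}(\omega_4)$, which graph automorphisms of which components of $\C^\circ_{\G^\ast}(s)$ act nontrivially on $t$ and land in $A(s)$ rather than $W^\circ(s)$ — essentially one must check that a $D_m$-component with $m \ge 2$ coming from a block of equal $\pm 1$ (or $\pm\omega_4$) coordinates has its relevant outer automorphism already \emph{inside} $W^\circ(s)$, hence does not enlarge $A(s)$, so that two such indices "waste" a would-be generator of $\Z(\G)^F$. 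Once that identification is in place the inequalities are immediate from $|A(s)| = |\N^\ast/\Levi^\ast| = 4$.
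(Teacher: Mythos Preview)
Your non-emptiness argument is sound in outline: after the normal form of Lemma~\ref{orbit}, any $w\in W$ with ${}^{w}t=tz_1$ must invert an odd number of coordinates in $I_{\{\pm\omega_4\}}(t)$ (these are the only inversions that change $t$ by a sign) and hence, by the parity constraint $\varepsilon(w)=1$, also an odd number in $I_{\{\pm 1\}}(t)$, so both sets are non-empty. The paper does this differently and more cheaply: it first observes that since ${}^{w_2}t=tz_2$ multiplies every coordinate by $\omega_4$, the element $w_2$ interchanges $I_{\{\pm 1\}}(t)$ and $I_{\{\pm\omega_4\}}(t)$, so the two sets have the \emph{same} cardinality; Remark~\ref{empty} then gives non-emptiness of the union, hence of each.

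The real gap is in your upper bound. Your proposed mechanism --- that for $m\ge 2$ the graph automorphism of the $D_m$-component on $I_{\{\pm 1\}}(t)$ already lies in $W^\circ(s)$, shrinking $A(s)$ --- is backwards: that graph automorphism is a single sign change, which is \emph{not} in $W(D_m)$, and in any case sign changes on $I_{\{\pm 1\}}(t)$ fix $t$ (the coordinates equal $1$) and so contribute nothing to $\omega_s$. One can have $|I_{\{\pm 1\}}(t)|\ge 2$ with $A(s)\cong\C_{\G^\ast}(s)/\C^\circ_{\G^\ast}(s)$ still equal to $C_2\times C_2$. What actually fails is not the size of $A(s)$ but the injectivity of $A(s)\to\N^\ast/\Levi^\ast$: one must exhibit an element of $\Levi^\ast$ whose $\omega_s$-image is $z_1$. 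This is where the equality step is essential. Assuming \emph{both} sets have size $\ge 2$, choose $a\ne b$ in $I_{\{\pm 1\}}(t)$ and $c\ne d$ in $I_{\{\pm\omega_4\}}(t)$ and set $w_1':=(a,-a)(d,-d)\in W$; then ${}^{w_1'}t=tz_1$, and the presence of the extra indices $b,c$ forces (via the roots $e_a\pm e_b,\,e_c\pm e_d\in\Phi(t)$) every $\lambda\in\Z(\C_\G(t))$ to have $\lambda_a,\lambda_d$ fourth roots of unity, whence ${}^{w_1'}\lambda\in\{\lambda,\lambda z_1\}$ and $w_1'$ centralises $\pi(\Z^\circ(\C_\G(t)))=\Z^\circ(\C^\circ_{\G^\ast}(s))$, i.e.\ $w_1'\in\Levi^\ast$. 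Since $w_1^{-1}w_1'\in W(t)$, this drags $w_1$ into $\Levi^\ast$, contradicting $\N^\ast/\Levi^\ast\cong\Z(\G)$. Your sketch neither isolates this explicit element nor secures the second pair $c\ne d$; treating the two index sets separately, as you propose, cannot produce such a $w_1'$.
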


\begin{proof}
Recall that $w_2 \in W$ satisfies ${}^{w_2} t=t z_2$ with $z_2=\prod_{i=1}^n \h_{e_i}(\omega_4)$. Therefore, we have  
$$I_{ \{ \pm 1 \} }({}^{w_2} t)=I_{  \{ \pm 1 \} }(t z_2) = I_{  \{ \pm \omega_4 \}}(t).$$
Thus, $w_2$ swaps the sets $I_{ \{ \pm 1 \} }(t)$ and $I_{  \{ \pm \omega_4 \} }(t)$. Hence, $|I_{ \{ \pm 1 \} }(t)|=|I_{  \{ \pm \omega_4 \} }(t) |$. Note that $I_{ \{ \pm 1 \}}(t)=I_{ \{ \pm 1 \} }({}^{w_1} t)$ and $I_{ \{ \pm \omega_4 \} }(t)=I_{ \{ \pm \omega_4 \} }({}^{w_1} t)$.

Suppose that $|I_{ \{ \pm 1 \} }(t)| > 1$ and let $a,b \in I_{ \{ \pm 1 \}}(t)$ with $a \neq b$. Fix $c,d \in I_{ \{ \pm \omega_4 \} }(t)$ with $c \neq d$ and let $w_1':=(a,-a) (d,-d) \in W$. It follows that ${}^{w_1'} t = t z_1$.

Recall that $e_a +e_b, e_a -e_b \in \Phi(t)$ and 
$$\mathrm{Z}(\C_\G(t))=\displaystyle\bigcap_{\alpha \in \Phi(t)} \mathrm{Ker}(\alpha).$$
Thus, for $\lambda=\prod_{i=1}^n \h_{e_i}(\lambda_i) \in \mathrm{Z}(\mathrm{C}_{\G}(t))$ we have $(\lambda_a \lambda_b^{ \pm 1})^2=1$. This implies that $\lambda_a$ and $\lambda_b$ are $4$th roots of unity. An analogue argument shows that $\lambda_c$ and $\lambda_d$ are also $4$th roots of unity. We conclude that ${}^{w_1'} \lambda=\lambda z_1$ or ${}^{w_1'} \lambda=\lambda$ in this case. 

Note that $ \pi ( \mathrm{C}_{\G}(t) )=\mathrm{C}^\circ_{\G^\ast}(s)$ by \cite[(2.2)]{Bonnafe}. From this we can conclude that
$$\pi ( \mathrm{Z}^\circ(\mathrm{C}_{\G}(t)) )= \mathrm{Z}^\circ(\mathrm{C}^\circ_{\G^\ast}(s)).$$
As ${}^{w_1'} \pi(\lambda)=\pi(\lambda)$ for all $ \lambda \in \mathrm{Z}(\mathrm{C}_\G(t))$ we conclude that $w_1' \in \Levi^\ast=\C_{\G^\ast}(\mathrm{Z}^\circ(\C^\circ_{\G^\ast}(s)))$. Since ${}^{w_1^{-1} w_1'} t = t$ it follows that $w_1^{-1} w_1' \in W(t)$. From this we deduce that $w_1 \in \Levi^\ast=\C_{\G^\ast}(\mathrm{Z}^\circ(\C^\circ_{\G^\ast}(s)))$. This contradicts the assumption $\mathbf{N}^\ast/ \Levi^\ast \cong \mathrm{Z}(\G)$.

We conclude that $|I_{ \{ \pm 1 \}}(t)| \leq 1$. By Remark \ref{empty} we must have $|I_{ \{ \pm 1 \} }(t)|=1$.
\end{proof}

By the previous lemma, up to a change of coordinates, we may assume that $I_{ \{ \pm 1 \} }(t)=\{ 1 \}$ and $I_{  \{ \pm \omega_4 \} }(t)=\{ n \}$.

\section*{Computations in the Weyl group}

Let us collect the information we have obtained so far. The root system $\Phi(t)$ of $\mathrm{C}_{\G}(t)$ is given by 
$$\Phi(t)=  \{ e_i - e_j \mid  t_i=t_j \} \setminus \{0 \}.$$
Observe that $\mathrm{C}_{\G}(t)$ is an $F$-stable Levi subgroup of $\G$ in duality with $\Levi^\ast$ so that $\Levi=\mathrm{C}_\G(t)$.

\begin{definition}
Let $I=\{2,\dots, n-1 \}$ and define 
$\overline{\Phi}':= \{ \pm e_i \pm e_j \mid i,j \in I \, \} \setminus \{ 0 \}$. Let 
$$\T_1:= \langle \h_{\alpha}(r) \mid r \in \overline{\mathbb{F}_q}^\times, \alpha \in \{e_1 \pm e_n \} \rangle$$
and $$\G_2:= \langle \h_{\bar{\alpha}}(\bar{r}),\x_{\alpha}(r) \mid \bar{\alpha} \in \overline{\Phi}', \alpha \in \Phi(t), r \in \overline{\mathbb{F}_q}, \bar{r} \in \overline{\mathbb{F}_q}^\times \rangle .$$
\end{definition}

The roots $\{ e_1 \pm e_n \}$ are orthogonal to those in $\bar{\Phi}'$ and no non-trivial linear combination of $\{e_1 \pm e_n \}$ and $\Phi'$ is a root in $\Phi$. Therefore, we have $\T_1  \subseteq	 \mathrm{Z}(\Levi)$. For $\T_2:=\G_2 \cap \T_0$ we have $\T_0=\T_1  \T_2$. This implies that $\Levi= \T_1 \G_2$. In addition, we have $\T_1 \cap \T_2 = \langle z_1 \rangle$.

\begin{lemma}\label{projection}
Consider the restriction map $$\mathrm{Res}: \{ \sigma  \in S_{ \{ \pm 1, \dots, \pm n \}} \mid \sigma(1),\sigma(n) \in \{ \pm 1, \pm n \} \} \to S_{ \{ \pm 1 , \pm n \} }.$$
We have $\mathrm{Res}(v) \in \langle (1,-1)(n,-n),  (1,-n)(-1,n) \rangle$.
\end{lemma}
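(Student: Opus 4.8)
The plan is to pin down $\mathrm{Res}(v)$ by first forcing $v$ to move $1$ and $n$ only among $\{\pm 1,\pm n\}$, and then using that $v$ centralises the element $w_2$ with ${}^{w_2}t=tz_2$.

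First I would observe that $v$ permutes the equivalence classes of the partition of $\{1,\dots,n\}$ given by $i\sim j\iff t_i=t_j$, preserving their sizes and the sign attached to each class. This holds because $F_0(t)=t^q$, so $F_0$ stabilises $\C_\G(t)=\C_\G(t^q)$ (as the order of $t$ is prime to $q$), while $\C_\G(t)$ is $F$-stable; hence $\m_v$ normalises $\C_\G(t)$, so $v$ normalises $\Phi(t)$. In particular the singleton classes $\{1\}$ and $\{n\}$ are sent to singleton classes, and the Weyl group $W(t)$ of $\Phi(t)$ — which is of type $\prod A$ — permutes the indices inside each class with no sign change and in particular fixes $1$ and $n$.

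Next I would check that $v(1),v(n)\in\{\pm 1,\pm n\}$, which is what makes $\mathrm{Res}(v)$ meaningful. Since $t$ lies over the $F^\ast$-fixed element $s$, one has $F(t)=tz$ for some $z\in\Z(\G)=\{1,z_1,z_2,z_1z_2\}$. Comparing coordinate vectors: the $m$-th entry of $F(t)={}^{\m_v}F_0(t)$ is $t_{|v^{-1}(m)|}^{\pm q}$, while the $m$-th entry of $tz$ is $t_m$ times an entry of a coordinate vector of $z$, which belongs to $\{\pm 1\}$ if $z\in\langle z_1\rangle$ and to $\omega_4\{\pm 1\}$ if $z\in z_2\langle z_1\rangle$. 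Taking $m=1$ and $m=n$ and using $t_1\in\{\pm 1\}$, $t_n\in\{\pm\omega_4\}$ together with the fact that the order of $t$ is prime to $q$, one finds that $t_{|v^{-1}(1)|}$ and $t_{|v^{-1}(n)|}$ are each either $\pm 1$ or a primitive fourth root of unity, so $|v^{-1}(1)|,|v^{-1}(n)|\in I_{\{\pm 1\}}(t)\cup I_{\{\pm\omega_4\}}(t)=\{1,n\}$. (One even sees that $\mathrm{Res}(v)$ lies in $\langle(1,-1),(n,-n)\rangle$ when $z\in\langle z_1\rangle$ and in the other coset of that subgroup otherwise, but this refinement will not be needed.)

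Then I would run the same analysis for $w_2$. From ${}^{w_2}t=tz_2$ with $z_2$ central, $\m_{w_2}$ normalises $\C_\G(t)$ (since $\C_\G(tz_2)=\C_\G(t)$), so $w_2$ too permutes the equivalence classes; and comparing $I_{\{\pm 1\}}({}^{w_2}t)=\{\,|w_2(1)|\,\}$ with $I_{\{\pm 1\}}(tz_2)=I_{\{\pm\omega_4\}}(t)=\{n\}$ gives $w_2(1)=\pm n$, and symmetrically $w_2(n)=\pm 1$. Thus $\mathrm{Res}(w_2)$ lies in the coset of the transposition $(1,n)(-1,-n)$ in the (order eight) group of signed permutations of $\{1,n\}$. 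On the other hand ${}^{w_2^2}t=tz_2^2=t$, because $z_2^2=\prod_{i=1}^n\h_{e_i}(-1)=1$ — and this is exactly where $n$ even is used — so $w_2^2\in W(t)$ and hence $\mathrm{Res}(w_2)^2=1$; since the two order-four elements of that coset are thereby excluded, $\mathrm{Res}(w_2)$ is $(1,n)(-1,-n)$ or $(1,-n)(-1,n)$. Finally, because $\mathrm{Res}$ restricts to a group homomorphism on the stabiliser of $\{\pm 1,\pm n\}$ in $S_{\{\pm 1,\dots,\pm n\}}$ and $v$ commutes with $w_2$, the element $\mathrm{Res}(v)$ centralises $\mathrm{Res}(w_2)$; the centraliser of either $(1,n)(-1,-n)$ or $(1,-n)(-1,n)$ in that group of signed permutations of $\{1,n\}$ is exactly $\langle(1,-1)(n,-n),(1,-n)(-1,n)\rangle$, which is the claim. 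The main obstacle I anticipate is the coordinate bookkeeping in the third paragraph — one must keep track of the non-uniqueness of coordinate vectors of elements of $\T_0$ and of the powers of $\omega_4$, and justify $F(t)\in t\,\Z(\G)$ from the compatibility of $F$ and $F^\ast$ under $\pi$ — while everything else is routine computation in the dihedral group of order eight.
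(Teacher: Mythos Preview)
Your argument is correct and reaches the conclusion, but it follows a somewhat different path from the paper's.

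For well-definedness of $\mathrm{Res}(v)$, the paper works directly with $s=\pi(t)\in\T_0^\ast$: from ${}^{v}F_0(s)=s$ it is immediate that $v$ preserves $I_{\{\pm 1,\pm\omega_4\}}(s)=\{1,n\}$. You instead lift to $t$ and use $F(t)\in t\,\Z(\G)$ together with a coordinate comparison; this is a valid and more explicit version of the same idea, though it requires the extra bookkeeping you flag.

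The more substantive difference is in the final step. The paper uses \emph{both} $w_1$ and $w_2$: it computes $\mathrm{Res}(w_1)=(1,-1)(n,-n)$ (via $w_1'w_1^{-1}\in W(t)\subseteq\ker(\mathrm{Res})$) and asserts $\mathrm{Res}(w_2)=(1,-n)(-1,n)$, then observes that the Klein four group $\langle(1,-1)(n,-n),(1,-n)(-1,n)\rangle$ is self-centralising in $S_{\{\pm 1,\pm n\}}$. You use only $w_2$: after pinning down $\mathrm{Res}(w_2)\in\{(1,n)(-1,-n),(1,-n)(-1,n)\}$ via the order constraint $w_2^2\in W(t)$ (which in turn rests on $z_2^2=1$, i.e.\ on $n$ even), you note that the centraliser in the dihedral group of order eight of either of these two involutions is already the target Klein four group. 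Your route is more economical in that $w_1$ never enters; it also makes explicit the order-two constraint on $\mathrm{Res}(w_2)$, which the paper's assertion ``$\mathrm{Res}(w_2)=(1,-n)(-1,n)$'' tacitly relies on (knowing only that $w_2$ swaps $I_{\{\pm 1\}}(t)$ and $I_{\{\pm\omega_4\}}(t)$ would a priori allow a $4$-cycle).
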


\begin{proof}
Firstly, note that ${}^v F_0(s)=s$ which implies that $I_{ \{ \pm \omega_4,\pm 1 \} }({}^v s)=I_{ \{ \pm \omega_4,\pm 1 \} }(s)$. Therefore, $\mathrm{Res}(v)$ is well-defined.

Since $w_2$ permutes the sets $I_{ \{ \pm 1 \} }(t)=\{1 \}$ and $I_{  \{ \pm \omega \} }(t)=\{ n \}$ we have $\mathrm{Res}(w_2)=(1,-n)(-1,n)$. Let $w_1'=(1,-1)(n,-n) \in W$. Then we have ${}^{w_1'} t= t z_1$. This implies that $w_1' w_1^{-1} \in W(t)$. Since $W(t) \subseteq \mathrm{Ker}(\mathrm{Res})$ we must have $\mathrm{Res}(w_1)= (1,-1)(n,-n)$.

As $w_1,w_2 \in \mathrm{C}_W(v)$ we have $[\mathrm{Res}(w_i),\mathrm{Res}(v)]=1$ for $i=1,2$. Thus, 
$$\mathrm{Res}(v) \in \mathrm{C}_{ S_{ \{ \pm 1, \pm n \} } }( \langle (1,-1)(n,-n),(1,-n)(-1,n) \rangle ).$$
A short calculation shows that $\langle (1,-1)(n,-n),(1,-n)(-1,n) \rangle$ is self-centralizing in $S_{ \{ \pm 1 , \pm n \} }$.
\end{proof}

For the following two lemmas recall that $\x_{\alpha}(t)$, $\h_{\alpha}(t)$ and $\n_{\alpha}(t)$ are not uniquely defined and their relations depend on the choice of certain structure constants. However, the relations simplify in the case where the involved roots are orthogonal, see \cite[Remark 2.1.7]{BS2006}.

\begin{lemma}\label{orthogonal}
Let $A,B \subseteq \overline{\Phi}$ such that $A \perp B$. Let $x=\prod_{\alpha \in A} \n_\alpha(r_\alpha)$ and $y=\prod_{\beta \in B} \n_\beta(r_\beta)$ for $r_\alpha,r_\beta \in \overline{\mathbb{F}_q}^\times$. If $x,y \in \G$ then $x$ and $y$ commute.
\end{lemma}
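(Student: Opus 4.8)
The plan is to reduce the assertion to a statement about a single root of $B$ and then to use the hypotheses $x,y\in\G$ to kill a torus-valued sign correction. First I would exploit $A\perp B$: for $\bar\alpha\in A$ and $\bar\beta\in B$ we have $s_{\bar\alpha}(\bar\beta)=\bar\beta$ and $\langle\bar\beta,\bar\alpha^\vee\rangle=0$, so by the simplified relations of \cite[Remark 2.1.7]{BS2006} conjugation by $\n_{\bar\alpha}(r)$ fixes pointwise the subtorus of $\T_0$ spanned by the $\h_{\bar\beta}(u)$, $\bar\beta\in B$ (its image in $\overline{W}$ fixes $\mathbb{R}B$ pointwise), and sends $\x_{\bar\gamma}(u)$ to $\x_{\bar\gamma}(\eta_{\bar\alpha,\bar\gamma}u)$ for $\bar\gamma\in\pm B$, where $\eta_{\bar\alpha,\bar\gamma}\in\{\pm1\}$ depends only on $\bar\alpha,\bar\gamma$ and satisfies $\eta_{\bar\alpha,\bar\gamma}=\eta_{\bar\alpha,-\bar\gamma}$. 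Hence conjugation by $x$ normalises the subgroup $\overline{\G}_B:=\langle\,\x_{\pm\bar\beta}(u)\mid\bar\beta\in B\,\rangle$ and acts on it by an automorphism which is trivial on its maximal torus and multiplies $\x_{\bar\gamma}(u)$ by $\epsilon_{\bar\gamma}:=\prod_{\bar\alpha\in A}\eta_{\bar\alpha,\bar\gamma}$.

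Since this automorphism must be compatible with the Chevalley commutator relations inside $\overline{\G}_B$, the assignment $\bar\gamma\mapsto\epsilon_{\bar\gamma}$ is the restriction of a $\{\pm1\}$-valued character of the root lattice of $\overline{\G}_B$; therefore it coincides with conjugation by some $t\in\T_0$ with $\bar\beta(t)=\epsilon_{\bar\beta}$ for all $\bar\beta\in B$. As $y\in\overline{\G}_B$ we obtain $xyx^{-1}=tyt^{-1}$, and since $t\,\n_{\bar\beta}(r)\,t^{-1}=\n_{\bar\beta}(\epsilon_{\bar\beta}r)=\h_{\bar\beta}(\epsilon_{\bar\beta})\n_{\bar\beta}(r)$, it is enough to prove that $\h_{\bar\beta}(\epsilon_{\bar\beta})=1$ for every $\bar\beta\in B$, equivalently that the accumulated correction $\prod_{\bar\beta\in B}\h_{\bar\beta}(\epsilon_{\bar\beta})$ is trivial.

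If $\bar\beta\in\Phi$ is a long root this is automatic: $\h_{\bar\beta}(-1)=\h_{e_i}(-1)\h_{e_j}(-1)=z_1^2=1$ for suitable $i\neq j$, by the relations $\h_{e_i}(-1)=z_1$ and $z_1^2=1$ recorded in the Notation section; thus the correction at $\bar\beta$ vanishes no matter what $\epsilon_{\bar\beta}$ is, and in particular the lemma holds whenever $B$ consists of long roots (note that $x\in\G$ has not even been used here). The remaining — and, I expect, hardest — case is that of short roots $\bar\beta=e_j$, for which $\h_{e_j}(-1)=z_1\ne1$: here a single $\n_{e_j}(r)$ lies outside $\G$, so the hypothesis $x,y\in\G$ is genuinely needed, and one must show that the short reflections occurring in factorisations of $x$ (and of $y$) combine so that $\prod_{\bar\alpha\in A}\eta_{\bar\alpha,e_j}=1$, i.e.\ the element $t$ can be chosen inside $\G$ with $e_j(t)=1$. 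This last point comes down to an explicit analysis of the structure constants $\eta_{\bar\alpha,\bar\beta}$ against the identification of $W$ with $\ker(\varepsilon)\subseteq\overline{W}$ and the surjection $(\overline{\mathbb{F}_q}^\times)^n\to\T_0$; this sign bookkeeping is the only step of the argument that is not formal.
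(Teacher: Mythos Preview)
Your overall strategy---reduce to a torus-valued correction and kill it with the hypothesis $x,y\in\G$---is sound and is essentially what the paper does, but the specific target you set yourself at the end is wrong. You assert that one must show $\epsilon_{e_j}=\prod_{\bar\alpha\in A}\eta_{\bar\alpha,e_j}=1$ for each short root $e_j\in B$. This fails already when $A$ consists of a single long root: by \cite[Remark~2.1.7(c)]{BS2006} one has $\n_{e_j}(r)^{\n_{\alpha}(r')}=\n_{e_j}(-r)$ whenever $\alpha\perp e_j$ and $\alpha$ is long, so $\epsilon_{e_j}=-1$ in that case. Relatedly, the sentence ``$\h_{\bar\beta}(\epsilon_{\bar\beta})=1$ for every $\bar\beta\in B$, equivalently the accumulated correction $\prod_{\bar\beta\in B}\h_{\bar\beta}(\epsilon_{\bar\beta})$ is trivial'' conflates two non-equivalent conditions; only the weaker one (triviality of the product) is what is needed and what is actually true.

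The correct bookkeeping is this: for a short root $e_j\in B$ one has $\h_{e_j}(\epsilon_{e_j})\in\{1,z_1\}$ with $z_1=\h_{e_1}(-1)\in\Z(\G)$, so all these corrections are central and can be collected; the product over the short roots of $B$ is then $z_1^{c}$ for an integer $c$ of the same parity as $|\{\beta\in B:\beta\text{ short}\}|$, and this exponent is even precisely because $y\in\G$ means $y\T_0\in\ker(\varepsilon)$. The paper carries this out more directly, without the detour through the $\x_{\bar\gamma}$'s and the auxiliary torus element $t$: it simply records, via \cite[Remark~2.1.7(c)]{BS2006}, that $\n_\alpha(r_\alpha)^{\n_\beta(r_\beta)}=\h_\alpha(-1)\n_\alpha(r_\alpha)$ when one of $\alpha,\beta$ is long and that the two factors commute when both are short, observes $\h_\alpha(-1)=1$ for long $\alpha$ and $\h_\alpha(-1)=z_1$ for short $\alpha$, and then uses that the number of short roots in $A$ (and in $B$) is even since $x,y\in\mathrm{N}_\G(\T_0)$. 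In fact this shows each $\n_\beta$ already centralises $x$, so the hypothesis on $y$ is not even needed once $x\in\G$ is used.
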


\begin{proof}
Recall that the inclusion map $\mathrm{N}_\G(\T_0) \hookrightarrow \mathrm{N}_{\overline{\G}}(\T_0)$ induces the embedding $W \hookrightarrow \overline{W}$ such that $W= \mathrm{Ker}(\varepsilon)$. We note that $\varepsilon( \n_{\alpha}(1) \T_0)=-1$ for $\alpha \in \bar{\Phi}$ if and only if $\alpha$ is a short root. As $x,y \in \mathrm{N}_\G(\T_0)$ we deduce that the number of short roots in $A$ resp. $B$ is even.

Let $\alpha \in A$ and $\beta \in B$. By \cite[Remark 2.1.7(c)]{BS2006} we have
$\n_{\alpha}(r_\alpha)^{\n_{\beta}(r_\beta)}=\n_{\alpha}(-r_\alpha)=\h_{\alpha}(-1) \n_{\alpha}(r_\alpha)$, if either $\alpha$ or $\beta$ is a long root. On the other hand, we have
$\n_{\alpha}(r_\alpha)^{\n_{\beta}(r_\beta)}=\n_{\alpha}(r_\alpha)$ if both $ \alpha$ and $\beta$ are short roots. Note that if $\alpha$ is a short root then $\h_\alpha(-1)=\h_{e_1}(-1)$. The result follows from this.
\end{proof}

In the following, we will consider the element
$$\n_1:=\n_{e_1}(1) \n_{e_1}(1)^{\n_{e_1-e_n}(1)} \in \mathrm{N}_{\G}(\T_0),$$
which is a preimage of $w_1'=(1,-1)(n,-n) \in W$. By the proof of Lemma \ref{projection} it is possible to find $\n_2' \in \langle n_{e_i}(1) \mid i \in I  \rangle \cap \mathrm{N}_\G(\T_0)$ such that the element
$$\n_2:= \n_{e_1-e_n}(1) \n_2' \in \mathrm{N}_{\G}(\T_0)$$
is a preimage of $w_2 \in W$.

\begin{lemma}\label{commute}
The elements $\n_1$ and $\n_2$ commute. In addition, we have $\n_1 \in \mathrm{C}_{\G}(\G_2)$.
\end{lemma}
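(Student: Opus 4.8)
The plan is to decompose everything according to the three blocks of indices $\{1\}$, $\{n\}$, $I=\{2,\dots,n-1\}$: the roots $e_1$, $e_n$, $e_1\pm e_n$ are supported on $\{1,n\}$, whereas the roots that build $\G_2$ — namely $\Phi(t)\subseteq\{e_i-e_j\mid i,j\in I\}$ and $\overline{\Phi}'$ — and the roots $e_i$ ($i\in I$) that build $\n_2'$ are all supported on $I$. Both assertions then come down to orthogonality together with one genuine rank-two computation. I would begin by pinning down the shape of $\n_1$: since $s_{e_1-e_n}(e_1)=e_n$, conjugating the monomial element $\n_{e_1}(1)$ by $\n_{e_1-e_n}(1)$ yields a monomial element $\n_{e_n}(c)$ with $c\in\{\pm1\}$ (no torus factor appears, because $\n_{e_1}(1)$ has trivial $\h$-part), so $\n_1=\n_{e_1}(1)\n_{e_n}(c)$ and in particular $\n_1^{\,2}=\h_{e_1}(-1)\h_{e_n}(-1)=z_1^{\,2}=1$.

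For $\n_1\in\C_\G(\G_2)$: recall $\G_2$ is generated by the $\x_\alpha(r)$ with $\alpha\in\Phi(t)$ and the $\h_{\bar\alpha}(\bar r)$ with $\bar\alpha\in\overline{\Phi}'$. For $\alpha\in\Phi(t)$ one has $\alpha\perp e_1$, $\alpha\perp e_n$, and none of $e_1\pm\alpha$, $e_n\pm\alpha$ is a root of $\overline{\Phi}$, so by the commutator relations $\x_\alpha(r)$ commutes with $\x_{\pm e_1}(\cdot)$ and $\x_{\pm e_n}(\cdot)$, hence with $\n_{e_1}(1)$ and $\n_{e_n}(c)$, hence with $\n_1$. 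Since $\n_1$ acts on $\T_0$ through $w_1'=s_{e_1}s_{e_n}$, which fixes the coroot $\bar\alpha^\vee$ for every $\bar\alpha\in\overline{\Phi}'$, we also get $\n_1\h_{\bar\alpha}(\bar r)\n_1^{-1}=\h_{\bar\alpha}(\bar r)$. Thus $\n_1$ centralizes every generator of $\G_2$. The same two observations show that $\n_{e_1-e_n}(1)$ likewise centralizes $\G_2$.

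For $[\n_1,\n_2]=1$: write $\n_2=\n_{e_1-e_n}(1)\,\n_2'$ with $\n_2'\in\langle\n_{e_i}(1)\mid i\in I\rangle\cap\mathrm{N}_\G(\T_0)$. The roots $e_i$ ($i\in I$) are short and orthogonal to $e_1$ and to $e_n$, so Lemma \ref{orthogonal} shows each $\n_{e_i}(1)$ commutes with $\n_{e_1}(1)$ and with $\n_{e_n}(c)$, hence with $\n_1$; therefore $\n_2'$ commutes with $\n_1$, and it remains only to prove $[\n_1,\n_{e_1-e_n}(1)]=1$. By the previous paragraph both $\n_1$ and $\n_{e_1-e_n}(1)$ lie in $\C_\G(\G_2)$, whose identity component is (up to a central torus) of type $A_1\times A_1$ with root system $\{\pm(e_1-e_n),\pm(e_1+e_n)\}$ and abelian Weyl group containing both $w_1'=s_{e_1-e_n}s_{e_1+e_n}$ and $s_{e_1-e_n}$; consequently $[\n_1,\n_{e_1-e_n}(1)]$ lies in $\T_0$. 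I would then compute this commutator explicitly inside the rank-two group from the Chevalley relations — using \cite[Remark 2.1.7]{BS2006} for the orthogonal pairs $e_1\perp e_n$ and $(e_1-e_n)\perp(e_1+e_n)$ — and check that the resulting element of $\T_0$, a product of values $\h_\gamma(-1)$, is in fact trivial; this is where the hypothesis that $\G$ is simply connected of type $D_n$ is used, through the relations $\h_{e_i}(-1)=z_1$ and the structure of $\Z(\G)$.

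The step I expect to be the main obstacle is precisely this last rank-two computation: everything before it is formal juggling of orthogonal roots, but to see that $[\n_1,\n_{e_1-e_n}(1)]$ carries no residual central torus element one has to track carefully the structure constants produced by conjugating $\n_{e_1}(1)$ and $\n_{e_n}(c)$ by $\n_{e_1-e_n}(1)$.
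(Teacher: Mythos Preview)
Your outline is correct and follows essentially the same route as the paper: reduce $[\n_1,\n_2]$ to $[\n_1,\n_{e_1-e_n}(1)]$ via orthogonality (the paper invokes Lemma~\ref{orthogonal} directly on $\n_1$ and $\n_2'$, which are in $\G$, rather than on the individual $\n_{e_i}(1)$), and then settle that remaining commutator by an explicit Chevalley computation, which is exactly the step you flag as the obstacle. The paper carries out that computation by expanding $\n_1^{\n_{e_1-e_n}(1)}=\n_{e_1}(1)^{\n_{e_1-e_n}(1)}\,\n_{e_1}(1)^{\h_{e_1-e_n}(-1)}$ and using $\h_{e_1-e_n}(-1)=\h_{e_1}(\omega_4)\h_{e_n}(\omega_4^{-1})$ together with $\h_{e_i}(-1)=z_1$; your intermediate observation that the commutator lies in $\T_0$ is not needed but does no harm.
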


\begin{proof}
Let us first prove that $\n_1$ and $\n_2$ commute. By \cite[Remark 2.1.7(c)]{BS2006} we have
$\n_{e_j}(u)^{\n_{e_i}(1)}=\n_{e_j}(-u)=\h_{e_j}(-1) \n_{e_j}(u)$ for $u \in \overline{\mathbb{F}_q}$, whenever $i\neq j$. By the relation in \cite[Theorem 2.1.6(b)]{BS2006} we have $\n_{e_1}(1)^{\n_{e_1-e_n}(1)}\in \{\n_{e_n}(1),\n_{e_n}(-1)\}$. By Lemma \ref{orthogonal},
$$\n_1^{n_2}= \n_1^{\n_{e_1-e_n}(1)}= \n_{e_1}(1)^{\n_{e_1-e_n}(1)}\n_{e_1}(1)^{\n_{e_1-e_n}(1)^2}= \n_{e_1}(1)^{\n_{e_1-e_n}(1)}\n_{e_1}(1)^{\h_{e_1-e_n}(-1)}.$$
According to \cite[Remark 2.1.8]{BS2006} we have $\h_{e_1-e_n}(-1)=\h_{e_1}(\omega_4) \h_{e_n}(\omega_4^{-1})$ , where $\omega_4 \in \overline{\mathbb{F}_q}^\times$ is a fourth root of unity. Using \cite[Remark 2.1.7(a)]{BS2006}, we obtain
$$ \n_{e_1}(1)^{\n_{e_1-e_n}(1)}\n_{e_1}(1)^{\h_{e_1-e_n}(-1)}=\n_{e_1}(1)^{\n_{e_1-e_n}(1)}\n_{e_1}(1)^{\h_{e_1}(\omega_4)}=\n_{e_1}(1)^{\n_{e_1-e_n}(1)}\n_{e_1}(1) z_1.$$
Since $\n_{e_1}(1)^{\n_{e_1-e_n}(1)}\in \{\n_{e_n}(1),\n_{e_n}(-1)\}$ we deduce that 
$$\n_{e_1}(1)^{\n_{e_1-e_n}(1)}\n_{e_1}(1) z_1=\n_{e_1}(1)\n_{e_1}(1)^{\n_{e_1-e_n}(1)}=\\n_1.$$
Therefore, $\n_1^{n_2}=\n_1$ and we conclude that $\n_1$ and $\n_2$ commute.


Finally, note that $\n_1 \in \mathrm{C}_\G(\G_2)$ by \cite[Remark 2.1.7(b)]{BS2006} and \cite[Theorem 2.1.6(c)]{BS2006}.
\end{proof}

By Lemma \ref{projection} there exist $\m_1  \in \langle \n_1, \n_{e_1-e_n}(1) \rangle$ and $\m_2 \in  \langle \n_{e_i}(1) \mid i \in I  \rangle \cap \mathrm{N}_\G(\T_0)$ such that 
$$\m:=\m_1 \m_2$$
satisfies $\m \T_0 = v$ in $W$. Since $w_2 \in \C_W(v)$ we necessarily have $(\m F_0)(\n_2) \n_2^{-1} \in \T_0$. By Lemma \ref{orthogonal} it follows that $\m_2$ commutes with $\n_{e_1-e_n}(1)$ and $\m_1$ commutes with $\n_2'$. From this we deduce that
$$(\m F)(\n_2) \n_2^{-1}={}^{\m} \n_2 \n_2^{-1}={}^{\m_2} \n_2' \n_2'^{-1}.$$
Since ${}^{\m_2} \n_2' \n_2'^{-1}$ is purely an expression in the roots $e_2,\dots,e_{n-1}$ we can deduce that 
$$(\m F)(\n_2) \n_2^{-1} \in \T_2 = \langle \h_{e_i}(r) \mid i \in I, r\in \overline{\mathbb{F}_q}^\times \rangle.$$
By Lang's theorem there exists $t_2 \in \T_2$ such that $(t_2 \m F_0)(\n_2)=\n_2$. Replacing $\m_2$ by $t_2 \m_2 \in \langle \n_{e_i}(r_i) \mid, r_i \in \overline{\mathbb{F}_q}^\times, i \in I  \rangle \cap \mathrm{N}_\G(\T_0)$ we can henceforth assume that $(\m F_0)(\n_2)=\n_2$.

By Lemma \ref{commute} it follows that $\m_1\in \langle \n_1, \n_{e_1-e_n}(1) \rangle$ commutes with $\n_1$. By Lemma \ref{orthogonal} we conclude that $\m_2$ and $\n_1$ commute. As $\n_1$ is $F_0$-stable it follows that $(\m F_0)(\n_1)=\n_1$.

If $y \in \T_0$ then we have an isomorphism $\G^F \to \G^{F y}, \, g \mapsto {}^{y^{-1}} g$ which yields isomorphic fixed-point structures for all relevant subgroups. We may thus fix a nice representative of $ v\in W$ in $\mathrm{N}_{\G}(\T_0)$ and assume the following:

\begin{assumption}
From now on, we suppose that $F=\m F_0$. In particular, the elements $\n_1,\n_2$ are $F$-stable.
\end{assumption}

 We are now ready to prove the main result of this section.

\begin{proposition}
We have $\Levi^F \langle \n_1, n_2 \rangle= \mathbf{N}^F$.
\end{proposition}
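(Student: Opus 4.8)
The plan is to establish the two inclusions separately. Since $\Levi^F \subseteq \mathbf{N}^F$ always holds, the inclusion $\Levi^F\langle\n_1,\n_2\rangle \subseteq \mathbf{N}^F$ amounts to showing $\n_1,\n_2 \in \mathbf{N}^F$; the reverse inclusion will follow once we also know that the images of $\n_1$ and $\n_2$ generate $\mathbf{N}^F/\Levi^F$.

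First I would check the membership $\n_1,\n_2 \in \mathbf{N}^F$. By our assumption that $F=\m F_0$ the elements $\n_1,\n_2$ are already $F$-stable, so it suffices to verify $\n_1,\n_2 \in \mathbf{N}$. As $\n_1$ is a preimage of $w_1'=(1,-1)(n,-n)$ we have ${}^{\n_1}t={}^{w_1'}t=tz_1$, and as $\n_2$ is a preimage of $w_2$ we have ${}^{\n_2}t=tz_2$. Since $z_1,z_2 \in \mathrm{Z}(\G)$ are central, this gives ${}^{\n_i}\Levi = {}^{\n_i}\C_\G(t)=\C_\G({}^{\n_i}t)=\C_\G(t)=\Levi$, so $\n_1,\n_2 \in \mathrm{N}_\G(\Levi)$. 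Applying $\pi$ and using $z_1,z_2 \in \kernel\pi$ we obtain ${}^{\pi(\n_i)}s=\pi({}^{\n_i}t)=\pi(t)=s$, hence $\pi(\n_1),\pi(\n_2) \in \C_{\G^\ast}(s) \subseteq \mathbf{N}^\ast$. Under the canonical isomorphism $\mathrm{N}_\G(\Levi)/\Levi \cong \mathrm{N}_{\G^\ast}(\Levi^\ast)/\Levi^\ast$ induced by duality, the class of $\n_i$ corresponds to the class of $\pi(\n_i)$, since both are represented by lifts of one and the same element $w_i'$ of $W=W^\ast$, which is an involution so that the inversion built into duality causes no discrepancy. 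Therefore $\n_i\Levi$ lies in $\mathbf{N}/\Levi$, so $\n_1,\n_2 \in \mathbf{N}$ and hence $\n_1,\n_2 \in \mathbf{N}^F$.

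Next I would identify the classes of $\n_1,\n_2$ inside $\mathbf{N}^F/\Levi^F$. Recall that the reductions made earlier identify $\mathbf{N}^F/\Levi^F$ with $\mathrm{Z}(\G)^F=\mathrm{Z}(\G)$ through the isomorphism obtained by composing the duality isomorphism, the inclusion $\mathbf{N}^\ast/\Levi^\ast \hookrightarrow \C_{\G^\ast}(s)/\C^\circ_{\G^\ast}(s)$, and the map $\omega_s$. Chasing $\n_i\Levi^F$ through this composite sends it to $\omega_s(\pi(\n_i))=[\n_i,t]=({}^{\n_i}t)\,t^{-1}=z_i$, that is, to $z_1$ for $i=1$ and to $z_2$ for $i=2$. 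Since $\mathrm{Z}(\G)=\langle z_1,z_2\rangle$ and the above map is an isomorphism, the classes $\n_1\Levi^F,\n_2\Levi^F$ generate $\mathbf{N}^F/\Levi^F$. Combining this with $\langle\n_1,\n_2\rangle \subseteq \mathbf{N}^F$ from the previous step yields $\mathbf{N}^F=\Levi^F\langle\n_1,\n_2\rangle$.

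The step I expect to demand the most care is matching, under the duality isomorphism $\mathrm{N}_\G(\Levi)/\Levi \cong \mathrm{N}_{\G^\ast}(\Levi^\ast)/\Levi^\ast$, the class of $\n_i$ with the class of $\pi(\n_i)$: one must be sure that the anti-isomorphism $^\ast$ (inversion on $W$) introduces no obstruction, which is precisely why it is relevant that $w_1'$ and $w_2$ are involutions. By contrast, the numerical input needed here—that $z_1$ and $z_2$ generate $\mathrm{Z}(\G)$ and that $\mathbf{N}^F/\Levi^F$ has order $4$—has already been arranged in the preceding reductions, so no additional work is required for it.
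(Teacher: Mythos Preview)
Your argument is correct and follows the same route as the paper's (very terse) proof: compute ${}^{\n_i}t = tz_i$, deduce $\pi(\n_i) \in \C_{\G^\ast}(s)$, and invoke the duality isomorphism $\mathbf{N}^F/\Levi^F \cong (\mathbf{N}^\ast)^{F^\ast}/(\Levi^\ast)^{F^\ast}$; you additionally trace the classes through $\omega_s$ to see that $\n_i\Levi^F \mapsto z_i$, which is exactly what is implicit in the paper.

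One small imprecision: you appeal to $w_2$ being an involution in $W$ to neutralise the inversion in the duality map, but this is neither proved nor necessarily true. What does hold is that ${}^{w_2^2}t = tz_2^2 = t$, so $w_2^2 \in W(t) = W_\Levi$ and hence $w_2\Levi^\ast = w_2^{-1}\Levi^\ast$ in the quotient; alternatively, since $\mathbf{N}^\ast/\Levi^\ast$ is a subgroup containing $w_i\Levi^\ast$, it also contains $w_i^{-1}\Levi^\ast$, and since $z_i^{-1}=z_i$ the $\omega_s$ computation is insensitive to the sign anyway. With this adjustment your proof stands.
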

\begin{proof}
The elements $\n_1,\n_2 \in \mathrm{N}_\G(\T_0)$ satisfy ${}^{\n_1} t =t z_1$
and ${}^{\n_2} t=t z_2$. From this we deduce that $\pi(\n_1), \pi(\n_2) \in \C_{\G^\ast}(s)$. By duality we have an isomorphism $\mathbf{N}^F/ \Levi^F \cong (\mathbf{N}^\ast)^{F^\ast} / (\Levi^\ast)^{F^\ast}$ from which we can now conclude that $\Levi^F \langle \, \n_2 \rangle= \mathbf{N}^F$.
\end{proof}

In the next section we will consider the subgroup $L_0$ of $\Levi^F$ defined by $L_0=\T_1^F \G_2^F$. As $\T_1  \subseteq	 \C_\Levi(\G_2)$ it follows that $L_0$ is a central product of $\T_1^F$ and $\G_2^F$. The following lemma shows that $\Levi^F/L_0 \cong C_2$.

\begin{lemma}\label{group}
Let $\mathcal{L}: \G \to \G, \, g\mapsto g^{-1} F(g)$, denote the Lang map of $\G$. There exists $x_1\in \T_1$ and $x_2 \in \T_2$ such that $\mathcal{L}(x_1)=\mathcal{L}(x_2)=h_{e_1}(-1)$ and $x:=x_1 x_2$ satisfies $\Levi^F=\T_1^F \G_2^F \langle x \rangle$.
\end{lemma}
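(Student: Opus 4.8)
The plan is to produce $x_1$ and $x_2$ directly from Lang's theorem, and then to pin down the index $[\Levi^F:L_0]$ using the isogeny $\T_1\times\G_2\to\Levi$.

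First I would record that the subtori $\T_1,\T_2\subseteq\T_0$ are connected and $F$-stable. Indeed, $F=\m F_0$ with $\m=\m_1\m_2$, where the Weyl group element attached to $\m_1$ only permutes, with signs, the coordinates in $\{1,n\}$ and the one attached to $\m_2$ only those in $I$, while $F_0$ preserves every subtorus of $\T_0$; hence $F(\T_i)=\T_i$ for $i=1,2$. Since $\T_1\cap\T_2=\langle z_1\rangle$, the element $z_1=\h_{e_1}(-1)$ lies in both $\T_1$ and $\T_2$. By Lang's theorem the Lang map $\mathcal{L}$ restricts to a surjective morphism $\T_i\to\T_i$, so I can choose $x_1\in\T_1$ and $x_2\in\T_2$ with $\mathcal{L}(x_1)=\mathcal{L}(x_2)=z_1=\h_{e_1}(-1)$.

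Next I would set $x:=x_1x_2$. Since $x_1\in\T_1\subseteq\mathrm{Z}(\Levi)$ and $z_1=\mathcal{L}(x_1)\in\mathrm{Z}(\G)$ has order $2$, one computes
\[
\mathcal{L}(x)=x_2^{-1}x_1^{-1}F(x_1)F(x_2)=x_2^{-1}\mathcal{L}(x_1)F(x_2)=\mathcal{L}(x_1)\,x_2^{-1}F(x_2)=z_1\mathcal{L}(x_2)=z_1^2=1,
\]
so $x\in\Levi^F$. Conversely $x\notin L_0=\T_1^F\G_2^F$: if $x=ab$ with $a\in\T_1^F$ and $b\in\G_2^F$, then $a^{-1}x_1=bx_2^{-1}$ lies in $\T_1\cap\G_2=\T_1\cap\T_0\cap\G_2=\T_1\cap\T_2=\langle z_1\rangle$, whence $x_1\in\T_1^F\langle z_1\rangle=\T_1^F$, which contradicts $\mathcal{L}(x_1)=z_1\neq 1$ (recall that $q$ is odd). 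Thus $[\Levi^F:L_0]\geq 2$.

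For the reverse inequality I would use that the multiplication map $\mu\colon\T_1\times\G_2\to\Levi$ is a surjective $F$-equivariant homomorphism of algebraic groups (well defined as a homomorphism because $\T_1$ centralizes $\G_2$) with kernel the central subgroup $Z=\{(1,1),(z_1,z_1)\}$ of order $2$, on which $F$ acts trivially. The exact sequence $(\T_1\times\G_2)^F\xrightarrow{\ \mu\ }\Levi^F\to H^1(F,Z)$ then identifies $\Levi^F/L_0$ with a subgroup of $H^1(F,Z)\cong Z\cong C_2$, so $[\Levi^F:L_0]\leq 2$. Together with the previous step this forces $[\Levi^F:L_0]=2$, and since $x\in\Levi^F\setminus L_0$ we conclude $\Levi^F=L_0\langle x\rangle=\T_1^F\G_2^F\langle x\rangle$. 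I do not expect a genuine obstacle: the only mildly delicate points are the $F$-stability of $\T_1,\T_2$ (immediate from the normal form $F=\m F_0$ fixed in the Assumption) and the bookkeeping identity $\T_1\cap\G_2=\langle z_1\rangle$.
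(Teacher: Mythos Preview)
Your argument is correct and follows the same idea as the paper's proof, which is extremely terse: it simply says that $x_1,x_2$ exist by Lang's theorem and that the second claim follows from $\T_1\cap\G_2=\T_1\cap\T_2=\langle\h_{e_1}(-1)\rangle$. You have merely unpacked this, supplying the $F$-stability of $\T_1,\T_2$, the explicit check that $x\in\Levi^F\setminus L_0$, and the standard Lang--Galois cohomology exact sequence for the central isogeny $\T_1\times\G_2\to\Levi$ with kernel $\langle(z_1,z_1)\rangle$ to bound $[\Levi^F:L_0]\leq 2$; this is precisely the content the paper leaves implicit.
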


\begin{proof}
The existence of $x_1$ and $x_2$ follows by applying Lang's theorem. Since $\T_1 \cap \G_2= \T_1 \cap \T_2= \langle h_{e_1}(-1) \rangle$ the second claim follows.
\end{proof}

\section*{Representation theory}

Let $\hat{s}: \mathcal{O} \Levi^F \to \mathcal{O}^\times$ be the character of $\Levi^F$ corresponding to the central element $s \in \mathrm{Z}(\Levi^\ast)$, see \cite[Equation 8.19]{MarcBook}.

\begin{lemma}\label{linear} The linear character $\hat{s}: \Levi^F \to \mathcal{O}^\times$ extends to $\N^F$.
\end{lemma}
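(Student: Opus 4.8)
The plan is to use the explicit group-theoretic description of $\Levi^F$ and $\N^F$ built up in the previous sections. By the proposition immediately above, $\N^F = \Levi^F \langle \n_1, \n_2 \rangle$, and by Lemma~\ref{group} we have $\Levi^F = \T_1^F \G_2^F \langle x \rangle$ with $x = x_1 x_2$, $x_1 \in \T_1$, $x_2 \in \T_2$. So $\N^F$ is generated over the central product $L_0 = \T_1^F \G_2^F$ by the three elements $x, \n_1, \n_2$. First I would pin down $\hat s$ on $L_0$: since $\hat s$ is the linear character of $\Levi^F$ attached to $s \in \Z(\Levi^\ast)$, its restriction to $\G_2^F$ is (up to the duality bookkeeping) the analogous character attached to the image of $s$ in the dual of $\G_2$, and its restriction to $\T_1^F \subseteq \Z(\Levi)^F$ is governed by the corresponding component of $t$. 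The key point is that $\hat s$ factors in a way compatible with the central product, so it suffices to understand how to extend across each of $x$, $\n_1$, $\n_2$.

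The heart of the argument is that $\hat s$ is $\N^F$-stable and that the relevant obstruction (cocycle) class in $H^2(\N^F/\Levi^F, \mathcal{O}^\times)$ vanishes. Stability is essentially built in: $\n_1, \n_2$ conjugate $t$ to $t z_1$, $t z_2$ respectively, i.e. they multiply $s$ by elements of $\Z(\G^\ast)^{F^\ast}$ that act trivially on $\Levi^\ast$ (since $z_i$ is central in $\G$, conjugation by $\n_i$ fixes $\Z(\Levi^\ast)$ pointwise), so $\hat s$ is fixed; for $x \in \Levi^F$ stability is automatic. Since $\N^F/\Levi^F$ is (by Lemma~\ref{assumption}) the noncyclic group $C_2 \times C_2$, I would exhibit an explicit extension by choosing values of the extended character on $\n_1$ and $\n_2$ (a $\pm 1$ choice each, available because $\hat s$ is $\mathcal{O}^\times$-valued and $\ell \nmid 2$, so square roots of $1$ and of $\hat s$-values behave well) and checking directly that the defining relations among $\n_1, \n_2, x$ modulo $\Levi^F$ are respected. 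Here Lemma~\ref{commute} ($\n_1$ and $\n_2$ commute, $\n_1 \in \C_\G(\G_2)$) and the $F$-stability arranged in the Assumption are exactly what make the relations manageable: $\n_1^2, \n_2^2 \in \Levi^F$ and $[\n_1,\n_2] = 1$, so one only needs to verify that the prescribed scalar on $\n_i^2$ agrees with $\hat s(\n_i^2)$ and that the scalars are consistent with $x$.

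The main obstacle I anticipate is the compatibility across $x$ together with the $\n_i$: one must check that the chosen extension to $\langle L_0, \n_1, \n_2 \rangle$ is compatible with the further extension by $x$, equivalently that $\hat s(x \, \n_i \, x^{-1} \, \n_i^{-1})$ matches the product of the prescribed scalars — this is where an actual computation with the Chevalley relations (via the identifications $z_1 = \h_{e_1}(-1)$, $z_2 = \prod \h_{e_i}(\omega_4)$ and the formulas from \cite{BS2006}) is unavoidable, and where the hypothesis $\ell \nmid (q^2-1)$ will be invoked to guarantee the needed roots of unity lie in $\mathcal{O}^\times$ and that no $\ell$-obstruction appears. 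I would also remark that $\hat s(\n_1^2)$ and $\hat s(\n_2^2)$ can be computed explicitly from $t_1 \in \{\pm 1\}$ and $t_n \in \{\pm\omega_4\}$ (the normalizations forced by $I_{\{\pm1\}}(t) = \{1\}$, $I_{\{\pm\omega_4\}}(t) = \{n\}$), so the $\pm 1$ ambiguities can actually be resolved rather than merely shown to be resolvable. Once the extension to $\N^F$ is constructed, the lemma is proved.
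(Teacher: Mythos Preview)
Your plan could in principle be made to work, but it takes a substantially harder route than the paper and contains one misstatement.

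The paper's proof is a three-line amalgam argument that avoids all explicit Chevalley-relation checking. Instead of decomposing $\Levi^F$ via $L_0 = \T_1^F \G_2^F$ and the element $x$, the paper passes to the \emph{smaller} subgroup $\T_0^F$: set $\lambda := \Res^{\Levi^F}_{\T_0^F}(\hat s)$. Sp\"ath's extension theorem \cite[Theorem~1.1]{BS} says that any linear character of $\T_0^F$ extends to its inertia group in $\N_{\G^F}(\T_0^F)$. Since $\n_1,\n_2 \in \N_{\G^F}(\T_0^F)$ and $\lambda$ is $\N^F$-stable, this yields an extension $\lambda'$ of $\lambda$ to $\T_0^F\langle \n_1,\n_2\rangle$. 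Now $\N^F = \Levi^F \cdot (\T_0^F\langle \n_1,\n_2\rangle)$ with intersection exactly $\T_0^F$, so one simply defines $\hat s'(l n) := \hat s(l)\lambda'(n)$ and checks well-definedness on the overlap. No cocycle computation, no relations among $x,\n_1,\n_2$ to verify, and no Chevalley identities beyond what was already used to set up $\n_1,\n_2$.

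By contrast, your approach works over the larger $L_0$ and has to negotiate the extra generator $x$, which is precisely the complication you flag as ``the main obstacle''. The paper sidesteps this entirely by pushing the extension problem down to the torus, where it is already solved in the literature. One concrete error in your sketch: the hypothesis $\ell \nmid (q^2-1)$ is \emph{not} used in this lemma at all (the paper invokes it only later, in Proposition~\ref{main}, to control $|\T_1^F|$). If your argument genuinely required it here, that would be a sign that the approach is suboptimal. Also, your stability argument for $\hat s$ under $\n_i$ is phrased a bit loosely; stability is really immediate from the definition $\N^F = \mathrm{N}_{\G^F}(\Levi, e_s^{\Levi^F})$ rather than from any statement about $z_i$ acting on $\Z(\Levi^\ast)$.
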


\begin{proof}
By \cite[Theorem 1.1]{BS} the character $\lambda:=\mathrm{Res}^{\Levi^F}_{\T_0^F}(\hat{s})$ extends to its inertia group in $\N_{\G^F}(\T_0^F)$. However, $\n_1, \n_2 \in \N_{\G^F}(\T_0^F)$ and $\lambda$ is $\mathbf{N}^F$-invariant which implies that $\lambda$ extends to a character $\lambda'$ of $\T_0^F \langle \n_1, \n_2 \rangle$. We define a character $\hat{s}': \mathbf{N}^F \to \mathcal{O}^\times$ by $\hat{s}'(x):= \hat{s}(l) \lambda'(n)$ where $x \in \mathbf{N}^F$ with $x=ln$ for $l \in \Levi^F$ and $n \in \T_0^F \langle \n_1, \n_2 \rangle$. Note that this character is well-defined as $\hat{s}$ and $\lambda$ agree on the intersection $\Levi^F \cap \T_0^F \langle \n_1,\n_2 \rangle= \T_0^F $. 
\end{proof}

The following lemma is a module theoretic generalization of \cite[Lemma 4.1]{BS}.

\begin{lemma}\label{module}
Let $\widetilde{Y}$ be a finite group with normal subgroup $\widetilde{X}$ and subgroup $Y$ such that $\widetilde{Y}=Y \widetilde{X}$. Denote $X:=Y \cap \widetilde{X}$ and suppose that $\ell \nmid  [Y:X]$. Suppose that $M$ is an indecomposable $\mathcal{O}X$-module which extends to an $\mathcal{O} Y$-module and suppose that $\widetilde{M}$ is an $\mathcal{O}\widetilde{X}$-module such that $M=\mathrm{Res}^{\widetilde{X}}_X(\widetilde{M})$. If $\widetilde{M}$ is $\widetilde{Y}$-invariant then $\widetilde{M}$ extends to $\widetilde{Y}$.
\end{lemma}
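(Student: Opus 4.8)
The plan is to reduce the statement to a standard Clifford-theoretic fact about extending modules along prime-to-$\ell$ quotients, applied to the triple $(\widetilde{Y},\widetilde{X},Y)$. First I would fix the setup on the quotient level: since $\widetilde{Y}/\widetilde{X}\cong Y/X$ (because $\widetilde{Y}=Y\widetilde{X}$ and $X=Y\cap\widetilde{X}$), the group $\widetilde{Y}/\widetilde{X}$ is a quotient of $Y/X$, so $\ell\nmid[\widetilde{Y}:\widetilde{X}]$ as well; in particular $\ell$ does not divide the order of the relevant extension obstructions. The key input is the hypothesis that $M$ already extends to $\mathcal{O}Y$; I would denote such an extension by $M_Y$, so that $\mathrm{Res}^Y_X(M_Y)\cong M$.

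Next I would address the obstruction to extending $\widetilde{M}$ from $\widetilde{X}$ to $\widetilde{Y}$. Since $\widetilde{M}$ is $\widetilde{Y}$-invariant, by the standard theory (e.g.\ the module-theoretic analogue of the obstruction class for projective representations, as in \cite{Rouquier3} or the references used for Lemma \ref{assumption}) there is an obstruction living in $H^2(\widetilde{Y}/\widetilde{X},\mathcal{O}^\times)$ — or more precisely in the relevant Schur-multiplier-type group — whose vanishing is equivalent to extendibility. Because $\ell\nmid[\widetilde{Y}:\widetilde{X}]$ and $\mathcal{O}$ is a complete local $\ell$-modular ring, this cohomology group has no $\ell$-torsion coming from the quotient, so the obstruction class has finite order prime to $\ell$; I would argue it must be trivial by comparing with the situation over $Y$. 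Concretely: restricting the obstruction for $\widetilde{Y}$ along the inclusion $Y/X\hookrightarrow\widetilde{Y}/\widetilde{X}$ (an isomorphism in our case) gives the obstruction for extending $\mathrm{Res}^{\widetilde{X}}_X\widetilde{M}=M$ from $X$ to $Y$, which vanishes by hypothesis since $M_Y$ exists. As the restriction map along an isomorphism of quotients is an isomorphism on the obstruction groups, the obstruction for $\widetilde{Y}$ vanishes too.

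The main obstacle I expect is bookkeeping the precise cohomological object: one must check that the "obstruction to extending an invariant module" is functorial in the right way so that the isomorphism $\widetilde{Y}/\widetilde{X}\xrightarrow{\sim}Y/X$ genuinely identifies the two obstruction classes, and that the prime-to-$\ell$ hypothesis is used correctly to ensure $M_Y$ can be chosen compatibly (an indecomposable module over $X$ extending to $Y$ with $\ell\nmid[Y:X]$ extends essentially uniquely up to a character of $Y/X$, so the obstruction is well-defined independent of the choice of $M_Y$). Once this functoriality is in place, the conclusion is immediate. An alternative, perhaps cleaner, route avoiding cohomology altogether would be to mimic the proof of \cite[Lemma 4.1]{BS} directly: build the extension of $\widetilde{M}$ to $\widetilde{Y}$ by hand, using the extension $M_Y$ to define the action of coset representatives of $\widetilde{X}$ in $\widetilde{Y}$ (identified via $Y$), and then verify the cocycle condition holds using indecomposability of $M$ together with $\ell\nmid[Y:X]$ to pin down scalars; I would present whichever of these two is shorter, likely the second.
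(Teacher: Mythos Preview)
Your proposal is correct and takes essentially the same approach as the paper. The paper phrases the obstruction as the splitting of a central extension (following Dade \cite{Dade}) rather than as a class in $H^2$, and the functoriality check you single out as the main obstacle is precisely what the paper carries out: it builds an explicit homomorphism $\widehat{\pi}$ from the central extension governing $\widetilde{M}$ to that governing $M$---using elements of $\widetilde{X}$ to translate coset representatives from $\widetilde{Y}$ into $Y$---and concludes via the five lemma, with indecomposability ensuring that both coefficient groups reduce to $k^\times$.
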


\begin{proof}
Let us recall some basic facts about Clifford theory, see \cite[Section 7.B]{Dat} (over $k$) and \cite{Dade} (over $\mathcal{O}$). We follow the notation in  \cite[Section 7.B]{Dat}.

Firstly, for $y \in Y$, define
$$N_y:=\{ \phi \in \End^\times_{\mathcal{O}}(M) \mid \phi(xm)=yxy^{-1} \phi(m) \text{ for all } x \in X, m \in M \}$$
and let $N:=\cup_{y\in Y} N_y$. Note that $N$ is a group with normal subgroup $N_1$. Since $M$ is $Y$-invariant we have a surjective morphism $Y \to N/N_1$ given by $y \mapsto y N_1$. We form the group
$$\widehat{Y}:=Y \times_{N/N_1} N= \{ (y, \varphi) \mid \varphi \in N_y \}.$$
We let $A:= \End_{\mathcal{O} X}(M)$.
Consider the following exact sequence:
$$1 \to A^\times \to \widehat{Y} \to Y \to 1.$$
The $\mathcal{O} X$-module $M$ extends to an $\mathcal{O} Y$-module if and only if this sequence splits, see \cite[1.7]{Dade}

The action of $X$ on $M$ defines $\phi_x \in N_x$ for every $x \in X$. We identify $X$ with its image under the diagonal embedding $X \hookrightarrow \widehat{Y}, x \mapsto (x, \phi_x)$. As $\ell \nmid [Y:X]$ it follows (see \cite[Theorem 4.5]{Dade}) that $M$ extends to a $\mathcal{O} Y$-module if and only if the following exact sequence splits:
$$1 \to A^\times /(1+J(A)) \to \widehat{Y}/X (1+J(A)) \to Y/X \to 1.$$

Similarly, we can look at $\widetilde{M}$ instead of $M$. We denote the corresponding objects with a tilde. Analogously, the module $\widetilde{M}$ extends to an $\mathcal{O} \widetilde{Y}$ module if and only if the following exact sequence splits:
$$1 \to \widetilde{A}^\times/(1+J(\widetilde{A})) \to \widehat{\widetilde{Y}}/\widetilde{X} (1+J(\widetilde{A})) \to \widetilde{Y}/\widetilde{X} \to 1.$$ 
Let $\pi: \widetilde{Y}/\widetilde{X} \to Y/X$ be the inverse map of the natural isomorphism $Y/X  \to \widetilde{Y}/\widetilde{X}$. Restriction defines a homomorphism $\widetilde{A}^\times \to A^\times$.

Now we define a map $\widehat{\widetilde{Y}} \to \widehat{Y}$ as follows. For $(\widetilde{y},\phi) \in \widehat{\widetilde{Y}}$ we let $\tilde{x} \in \widetilde{X}$ such that $y:=\tilde{y} \tilde{x} \in Y$. Let $\phi_{\tilde{x}}$ be the natural action of $\widetilde{x}$ on $\widetilde{M}$. Then it follows that $\phi \phi_{\tilde{x}} \in \widetilde{N}_{y} \subseteq N_y$. We define 
$$\widehat{\pi}: \widehat{\widetilde{Y}}/\widetilde{X} \to \widehat{Y}/X, \, (\widetilde{y},\phi) \mapsto (y,\phi \phi_{\widetilde{x}}).$$
Note that if $\tilde{x}' \in \widetilde{X}$ with $y':=\tilde{y} \tilde{x}' \in Y$ then $x:=\tilde{x}^{-1} \tilde{x}' \in X$ and we have $y'=y x$. From this we deduce that $(y',\phi \phi_{\tilde{x}'})=(y, \phi \phi_{x})$ in $\tilde{Y}/X$ which shows that the map $\widehat{\pi}$ is well-defined. We can therefore consider the following commutative diagram:
\begin{center}
\begin{tikzpicture}
  \matrix (m) [matrix of math nodes,row sep=3em,column sep=3em,minimum width=2em] {
 1 & \widetilde{A}^\times /(1+J(\widetilde{A})) &  \widehat{\widetilde{Y}}/\widetilde{X} (1+J(\widetilde{A}))  & \widetilde{Y}/\widetilde{X} & 1 \\  
    1 & A^\times /(1+J(A)) &  \widehat{Y}/X (1+J(A)) & Y/X & 1  \\};
\path[-stealth]
(m-1-2) edge node [left] {} (m-2-2)
(m-1-3) edge node [left] {$\widehat{\pi}$} (m-2-3)
(m-1-4) edge node [left] {$\pi$} (m-2-4)

(m-1-1) edge node [left] {} (m-1-2)
(m-1-2) edge node [above] {} (m-1-3)
(m-1-3) edge node [above] {} (m-1-4)
(m-1-4) edge node [above] {} (m-1-5)

(m-2-1) edge node [left] {} (m-2-2)
(m-2-2) edge node [above] {} (m-2-3)
(m-2-3) edge node [above] {} (m-2-4)
(m-2-4) edge node [above] {} (m-2-5);
\end{tikzpicture}
\end{center}
Now note that $\pi$ is an isomorphism. Also as $M$ and $\widetilde{M}$ are indecomposable we have that $A^\times/(1+J(A)) \cong k^\times$ resp. $\widetilde{A}^\times/(1+J(\widetilde{A}))\cong k^\times$. Thus, the first and the third vertical map are isomorphisms. By the five lemma, it follows that $\widehat{\pi}: \widehat{\widetilde{Y}}/\widetilde{X}(1+J(\widetilde{A})) \to \widehat{Y}/X (1+J(A))$ is also an isomorphism. Thus, the two extensions are isomorphic. However, by assumption we already know that $M$ extends to an $\mathcal{O} Y$-module which implies that the sequence in the second row splits. Thus, also the sequence of the first row splits and $\widetilde{M}$ extends to an $\mathcal{O}\widetilde{Y}$-module.
\end{proof}

We are now ready to prove the main statement of this section.

\begin{proposition}\label{main}
Let $M$ be a $\mathbf{N}^F$-invariant indecomposable $\mathcal{O} \G^F$-$\mathcal{O} \Levi^F e_s^{\Levi^F}$-bimodule. If $\ell$ does not divide $q^2-1$ then $M$ extends to an $\mathcal{O} \G^F$-$\mathcal{O} \mathbf{N}^F$-bimodule.
\end{proposition}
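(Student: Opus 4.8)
First I would restate the assertion group-theoretically. An $\mathcal{O}\G^F$-$\mathcal{O}\Levi^F$-bimodule is the same thing as an $\mathcal{O}[\G^F\times\Levi^F]$-module (with $\Levi^F$ acting through its opposite), and likewise for $\N^F$; under this dictionary, extending $M$ to an $\mathcal{O}\G^F$-$\mathcal{O}\N^F$-bimodule means extending $M$, as a module, from $\widetilde X:=\G^F\times\Levi^F$ to $\widetilde Y:=\G^F\times\N^F$. Note that $M$ is $\widetilde Y$-invariant: it is $\N^F$-invariant by hypothesis, and conjugation by $\G^F$ is realised by the left action. The plan is to apply Lemma \ref{module} with $Y:=\G^F\times\langle L_0,\n_1,\n_2\rangle$ and $X:=Y\cap\widetilde X$ (the choice of $Y$ being essentially forced, since $\Levi^F/L_0\cong C_2$ has no proper non-trivial subgroup). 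The elements $\n_1,\n_2$ normalise $L_0$ (by Lemma \ref{commute}, and because $\n_2$ stabilises $\T_1$ and $\G_2$), they satisfy $\n_1^2=1$, $\n_2^2\in L_0$ and $[\n_1,\n_2]=1$, and $\langle\n_1,\n_2\rangle$ maps onto $\N^F/\Levi^F\cong C_2\times C_2$ since $\N^F=\Levi^F\langle\n_1,\n_2\rangle$. A short computation in $\N^F/L_0$ then gives $\langle L_0,\n_1,\n_2\rangle\cap\Levi^F=L_0$ and $\langle L_0,\n_1,\n_2\rangle\Levi^F=\N^F$, whence $X=\G^F\times L_0$, $\widetilde Y=Y\widetilde X$ and $[Y:X]=[\N^F:\Levi^F]=4$. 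As $\ell\nmid q^2-1$ and $q$ is odd (the reductions following Lemma \ref{assumption}), $\ell\neq 2$, so $\ell\nmid[Y:X]$. Hence Lemma \ref{module} applies once we check (a) $M|_{\G^F\times L_0}$ is indecomposable, and (b) $M|_{\G^F\times L_0}$ extends to an $\mathcal{O}[\G^F\times\langle L_0,\n_1,\n_2\rangle]$-module.

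Next I would deal with the central torus. Because $\T_1$ is two-dimensional and, by Lemma \ref{projection}, $F$ acts on its cocharacter lattice as $q$ times an element of order at most two, the order of $\T_1^F$ divides $(q^2-1)^2$; this is where the hypothesis $\ell\nmid q^2-1$ enters, as it makes $\T_1^F$ an $\ell'$-group. Being central in $\Levi^F$, $\T_1^F$ then acts on the indecomposable $M$ through a genuine linear character $\theta\colon\T_1^F\to\mathcal{O}^\times$, namely the restriction of $\hat s$, and the $\N^F$-invariance of $M$ together with the fact that $\n_1$ inverts $\T_1$ forces $\theta=\theta^{-1}$, i.e.\ $\theta^2=1$. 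For (a): since $[\Levi^F:L_0]=2$ and $\ell$ is odd, $M|_{\G^F\times L_0}$ is either indecomposable or induced from an indecomposable summand $N$; in the latter case $M=\Ind_{\G^F\times L_0}^{\G^F\times\Levi^F}(N)$ and, using $\N^F=\Levi^F\langle\n_1,\n_2\rangle$, an extension of $N$ to $\G^F$ times the stabiliser of $N$ in $\langle L_0,\n_1,\n_2\rangle$ produces an extension of $M$ to $\G^F\times\N^F$ by induction and the Mackey formula. So I may assume $M|_{\G^F\times L_0}$ is indecomposable, which handles (a).

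The heart is (b), and I claim $M|_{\G^F\times L_0}$ extends by letting $\n_1$ act trivially and $\n_2$ by a suitable semilinear automorphism. The automorphism $c_1$ of $L_0$ given by conjugation by $\n_1$ is trivial on $\G_2^F$ (as $\n_1\in\mathrm{C}_{\G}(\G_2)$ by Lemma \ref{commute}) and is inversion on $\T_1^F$; since $\T_1^F$ acts on $M$ through the scalar character $\theta$ with $\theta^2=1$, inversion acts as the identity there, so $c_1$ fixes $M|_{\G^F\times L_0}$ on the nose and $\mathrm{id}_M$ is a $c_1$-semilinear automorphism with $\mathrm{id}_M^2$ equal to the action of $\n_1^2=1$. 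On the other hand $M|_{\G^F\times L_0}$ is $\n_2$-invariant (by $\N^F$-invariance of $M$, since $\n_2$ normalises $L_0$) and $\langle L_0,\n_2\rangle/L_0$ is cyclic, so by the cyclic case used in Lemma \ref{assumption} it extends along $\n_2$: there is a $c_2$-semilinear automorphism $\phi_2$ whose square is the action of $\n_2^2\in L_0$. Because $\mathrm{id}_M$ commutes with $\phi_2$ and $\n_1\n_2=\n_2\n_1$, the assignments $\n_1\mapsto\mathrm{id}_M$ and $\n_2\mapsto\phi_2$ satisfy the defining relations of $\langle L_0,\n_1,\n_2\rangle$ over $L_0$, and therefore define an $\mathcal{O}[\G^F\times\langle L_0,\n_1,\n_2\rangle]$-module restricting to $M|_{\G^F\times L_0}$. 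With (a) and (b) established, Lemma \ref{module} gives that $M$ extends to an $\mathcal{O}[\G^F\times\N^F]$-module, i.e.\ to an $\mathcal{O}\G^F$-$\mathcal{O}\N^F$-bimodule, as claimed.

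I expect the main obstacle to lie in step (b), and more precisely in the realisation that one must first restrict to $L_0$: over $\Levi^F$ itself the same recipe fails, because conjugation by $\n_1$ moves the element $x$ of Lemma \ref{group} by $x_1^{-2}\in\T_1^F$, and $\theta(x_1^2)$ may equal $-1$, so $\n_1$ cannot be made to act trivially on $M|_{\G^F\times\Levi^F}$ and a genuine $H^2(C_2\times C_2,k^\times)$-obstruction can appear. Passing to $L_0$ via Lemma \ref{module} is exactly what removes the $x$-direction, and it is available only thanks to $\ell\nmid q^2-1$, which simultaneously forces $\T_1^F$ to be an $\ell'$-group (so that $\theta$ is a well-defined character with $\theta^2=1$) and $[\N^F:\Levi^F]$ to be prime to $\ell$; without it the residual sign need not be controllable, as foreseen in the remark following Proposition \ref{ext}.
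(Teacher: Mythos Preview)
Your overall architecture matches the paper's: restrict to $L_0$, let $\n_1$ act trivially, extend cyclically along $\n_2$, and invoke Lemma~\ref{module}. Your Case~1 (where $\Res_{\G^F\times L_0^{\opp}} M$ stays indecomposable) is essentially the paper's Case~1 and is correct: the condition $\theta^2=1$ is exactly what is needed for $\mathrm{id}_M$ to be $c_1$-semilinear, and the commutation $[\n_1,\n_2]=1$ makes the two extensions compatible.

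The gap is in your Case~2, and it is precisely the obstruction you yourself flag in the last paragraph. If $\Res_{\G^F\times L_0^{\opp}} M\cong N\oplus N^x$ with $N\not\cong N^x$, then the Mackey argument you sketch requires inducing from the stabiliser of $N$ in $\N^F$, not merely in $\langle L_0,\n_1,\n_2\rangle$. Since $N$ is always $\n_1$-stable but need not be $\n_2$-stable, that stabiliser can be $\langle L_0,\n_1,\n_2 x\rangle$ (the paper's second subcase of Case~2). Extending $N$ there by letting $\n_1$ act trivially and $\n_2 x$ act by a chosen $\phi$ forces the relation $[\n_1,\n_2 x]\in\T_1^F$ to act trivially; but $[\n_1,\n_2 x]=x_1^{\pm 2}$, and with only $\theta^2=1$ you cannot conclude $\theta(x_1^2)=1$. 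If instead you induce from the stabiliser inside $\langle L_0,\n_1,\n_2\rangle$ (which in this subcase is only $\langle L_0,\n_1\rangle$), the induced module has dimension $4\dim N=2\dim M$, so its restriction to $\Levi^F$ is $M\oplus M$, not $M$.

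The paper closes this gap by an extra first step you omit: tensor with $\hat s^{-1}$ (using Lemma~\ref{linear}) to reduce to the principal series $e_1^{\Levi^F}$, where $\T_1^F$ acts on $M_0$ through the \emph{trivial} character rather than merely one of order dividing $2$. With $\theta=1$ the commutator $x_1^{\pm 2}\in\T_1^F$ acts trivially and the extension to $\langle L_0,\n_1,\n_2 x\rangle$ goes through. So your diagnosis that ``$\theta(x_1^2)$ may equal $-1$'' is exactly right, but passing to $L_0$ alone does not eliminate it; the twist by $\hat s^{-1}$ is what does.
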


\begin{proof}
By Lemma \ref{linear}, it follows that $M$ extends to $ \G^F \times (\mathbf{N}^F)^{\mathrm{opp}}$ if and only if $M \otimes_{\mathcal{O}} \hat{s}^{-1}$ extends to $\mathcal{O}\G^F \times (\mathbf{N}^F)^{\mathrm{opp}}$. We may therefore assume from now on that $M$ is an indecomposable $\mathcal{O} \G^F$-$\mathcal{O} \mathbf{L}^F e_1^{\Levi^F}$-bimodule.

Since $\ell \nmid [\Levi^F: L_0]$ there exists an indecomposable $\mathcal{O} \G^F$-$\mathcal{O} L_0$-bimodule $M_0$ such that $M$ is a direct summand of $\Ind^{\G^F \times (\Levi^F)^{\mathrm{opp}}}_{\G^F \times L_0^{\mathrm{opp}}}(M_0)$. As $1 \times (\T^F_1)^{\mathrm{opp}}$ is central in $\G^F \times {L_0}^{\mathrm{opp}}$ we deduce that
$$\mathrm{Res}^{\G^F \times {L_0}^{\mathrm{opp}}}_{1 \times {(\T_1^F)_{\ell'}}^{\mathrm{opp}}}(M_0)=S^{\mathrm{dim}(M_0)}$$
for some simple $\mathcal{O} (\T_1)_{\ell'}^F$-module $S$. Let $\lambda:  (\T_1)_{\ell'}^F \to \mathcal{O}^\times$ be the character corresponding to $S$. Since $\mathrm{Res}^{\G^F \times {\Levi^F}^{\mathrm{opp}}}_{1 \times {\Levi^F}}$ is a $\mathcal{O}\Levi^F e_1^{\Levi^F}$-module it follows that $\lambda$ is a character in a unipotent block, which implies that $\lambda$ is the trivial character.

Note that $|\T_1^F|\in \{(q-1)^2,(q+1)^2 \}$ and therefore $\ell \nmid |\T_1^F|$ by assumption. We conclude that 
$$\mathrm{Res}^{\G^F \times {L_0}^{\mathrm{opp}}}_{1 \times {(\T_1^F)}^\mathrm{opp}}(M_0)=\mathcal{O}^{\mathrm{dim}(M_0)},$$
where $\mathcal{O}$ is the trivial $\mathcal{O}{(\T_1^F)}^{\mathrm{opp}}$-module.
Since $L_0/\T_1^F \cong \G^F_2/ \langle z_1 \rangle$ we may consider $M_0$ as an indecomposable $\mathcal{O}[\G^F \times (\G^F_2/\langle z_1 \rangle)^{\mathrm{opp}}]$-module.

The element $\n_1$ centralizes $\G^F_2$ and hence we can extend $M_0$ to an $\mathcal{O}\G^F \times (L_0 \langle \n_1 \rangle)^{\mathrm{opp}}$-module by letting $\n_1$ act trivially on $M_0$. We denote this extension by $M_1$.

Since $M$ is a direct summand of $\Ind^{\G^F \times (\Levi^F)^{\mathrm{opp}}}_{\G^F \times L_0^{\mathrm{opp}}}(M_0)$ it follows that $\Res^{\G^F \times (\Levi^F)^{\mathrm{opp}}}_{\G^F \times(L_0)^{\mathrm{opp}}} M$ is a direct summand of 
$$\Res^{\G^F \times (\Levi^F)^{\mathrm{opp}}}_{\G^F \times(L_0)^{\mathrm{opp}}} \Ind^{\G^F \times (\Levi^F)^{\mathrm{opp}}}_{\G^F \times L_0^{\mathrm{opp}}}(M_0) \cong M_0 \oplus  M_0^x,$$
where $x=x_1 x_2 \in \Levi^F$ as in Lemma \ref{group}. As the quotient group $\Levi^F/L_0$ is cyclic of $\ell'$ order it follows by \cite[Lemma 10.2.13]{Rouquier3} that either $\Res^{\G^F \times (\Levi^F)^{\mathrm{opp}}}_{\G^F \times(L_0)^{\mathrm{opp}}}(M)\cong M_0 $ or $\Res^{\G^F \times (\Levi^F)^{\mathrm{opp}}}_{\G^F \times (L_0)^{\mathrm{opp}}}(M)\cong M_0 \oplus  M_0^x$. We treat these two cases separately.
\\
\\
\textbf{Case 1:} Assume that $\Res^{\G^F \times (\Levi^F)^{\mathrm{opp}}}_{\G^F \times(L_0)^{\mathrm{opp}}}(M)\cong M_0 $. \\
Since $M$ is $\mathbf{N}^F$-invariant it follows that $M_0$ is $\mathbf{N}^F$-invariant.

We have $[\n_1,\n_2]=1$. Thus, the action of $\n_1$ on $ M_1^{\n_2}$ is equal to the action of $\n_2 \n_1 \n_2^{-1}=\n_1$ on $M_1$. However, $\n_1$ acts trivially on $M_0$. Since $M_0$ is $\n_2$-invariant there exists an isomorphism $\phi: M_0 \to M_0^{\n_2}$ of $\G^F \times(L_0)^{\mathrm{opp}}$-modules. Since $\n_1$ acts trivially on $M_1$ it follows that $\phi: M_1 \to M_1^{\n_2}$ is an isomorphism of $\mathcal{O}\G^F \times (L_0 \langle \n_1 \rangle)^{\mathrm{opp}}$-modules or in other words $M_1$ is $\n_2$-invariant. From this we conclude that $M_1$ extends to $\G^F \times (L_0 \langle \n_1, \n_2 \rangle)^{\mathrm{opp}}$.

Applying Lemma \ref{module} to $\tilde{X}= \G^F \times (\Levi^F)^{\mathrm{opp}}$ and $Y= \G^F \times (L_0 \langle \n_1,\n_2 \rangle)^{\mathrm{opp}}$) implies that $M$ extends to a $\G^F \times (\mathbf{N}^F)^{\mathrm{opp}}$-module.
\\
\\
\textbf{Case 2:} Assume that $\Res^{\G^F \times (\Levi^F)^{\mathrm{opp}}}_{\G^F \times (L_0)^{\mathrm{opp}}}(M)\cong M_0 \oplus {}^x M_0$.
\\
We note that $ M_0^{\n_1} \cong M_0$. On the other hand, we either have $ M_0^{\n_2} \cong M_0$ or $ M_0^{\n_2 x} \cong M_0$.

Suppose that $ M_0^{\n_2} \cong M_0$. Then $M_0$ is $\mathbf{N}^F$-invariant. Using the same proof as in case 1 we deduce that $M_0$ extends to a $\G^F \times (L_0 \langle \n_1,\n_2 \rangle)^{\mathrm{opp}}$-module.

Suppose that $ M_0^{\n_2 x} \cong M_0$. We have $h_{e_1}(-1)^{x}=h_{e_1}(-1)$ as $\mathcal{L}(x_1)=\mathcal{L}(x_2)=h_{e_1}(-1)$. Since $x_2 \in \G_2$ we conclude that $\n_1^{x_2}=\n_1$. Therefore,
$\n_1^{\n_2 x}=\n_1^x =\n_1^{x_1}.$

Clearly, $x_1 x_1^{\n_1^{-1}} \in \T_1^F$ which implies that $\n_1^{x_1} \n_1^{-1} \in \T_1^F$. From this we deduce that $\n_1^{\n_2 x} \n_1^{-1} \in \T_1^F$.
Now $\n_1$ acts on $M_1^{\n_2 x}$ as $\n_1^{\n_2 x}$ acts on $M_1$. Since $\T^F_1$ and $\n_1$ act trivially on $M_1$ it follows that $\n_1$ acts trivially on $M_1^{\n_2 x}$. Since $M_0$ is $\n_2 x$-invariant it follows that $M_1$ is $\n_2 x$-invariant. Thus, $M_0$ extends to a $\G^F \times (L_0 \langle \n_1,\n_2 x \rangle)^{\mathrm{opp}}$-module.

It follows that $M_0$ extends to a $\G^F \times (L_0 \langle \n_1, \n_2' \rangle)^{\mathrm{opp}}$-module $M'$, where $\n_2' \in \{ \n_2, \n_2 x \}$. By Mackey's formula we deduce that 
$$\Res_{\G^F \times (\Levi^F)^{\mathrm{opp}}}^{\G^F \times (\mathbf{N}^F)^{\mathrm{opp}}}\mathrm{Ind}_{\G^F \times (L_0 \langle \n_1, \n_2' \rangle)^{\mathrm{opp}}}^{\G^F \times (\mathbf{N}^F)^{\mathrm{opp}}} M' \cong \mathrm{Ind}_{\G^F \times L_0^{\mathrm{opp}}}^{\G^F \times (\Levi^F)^{\mathrm{opp}}} \mathrm{Res}_{\G^F \times (L_0)^{\mathrm{opp}}}^{\G^F \times (L_0 \langle \n_1,\n_2' \rangle)^{\mathrm{opp}}} M'\cong M.$$
Thus, $\mathrm{Ind}_{\G^F \times (L_0 \langle \n_1, \n_2' \rangle)^{\mathrm{opp}}}^{\G^F \times (\mathbf{N}^F)^{\mathrm{opp}}} M'$ is an extension of $M$ to $\G^F \times (\mathbf{N}^F)^{\mathrm{opp}}$. This finishes the proof.
\end{proof}

Using a standard argument in Clifford theory we can now deduce Proposition \ref{ext} from the previous proposition.

\begin{proof}[Proof of Proposition \ref{ext}] According to \cite[Theorem 7.2]{Dat} the bimodule $H^d(\Y_\U^\G,\mathcal{O}) e_s^{\Levi^F}$ is $\mathcal{O} \mathbf{N}^F$-invariant. Let $H^d(\Y_\U^\G,\mathcal{O}) e_s^{\Levi^F}= \oplus_{i=1}^{k} M_i$ be a decomposition into $\mathbf{N}^F$-orbits of indecomposable direct summands of $H^d(\Y_\U^\G,\mathcal{O}) e_s^{\Levi^F}$. Let $N_i$ be an indecomposable direct summand of $M_i$ and $T_i$ be its inertia group in $\mathbf{N}^F$. If $T_i$ is a proper subgroup of $\mathbf{N}^F$ then $T_i/ \Levi^F$ is cyclic of $\ell'$-order so that $N_i$ extends to $\G^F \times (T_i)^{\mathrm{opp}}$. If $T_i= \mathbf{N}^F$ then $N_i$ extends to $\G^F\times (\mathbf{N}^F)^{\mathrm{opp}}$ by Proposition \ref{main}. Let $N_i'$ be an extension of $N_i$ to $\G^F \times (T_i)^{\mathrm{opp}}$. By Clifford theory, it follows that $\mathrm{Ind}^{\G^F \times (\mathbf{N}^F)^{\mathrm{opp}}}_{\G^F \times (T_i)^{\mathrm{opp}}} N_i'$ is an extension of $M_i$. This shows that $H^d(\Y_\U^\G,\mathcal{O}) e_s^{\Levi^F}$ extends to $\G^F \times (\mathbf{N}^F)^{\mathrm{opp}}$.
\end{proof}

\section*{Proof of Morita equivalence}

In this final section we prove that the extended bimodule induces a Morita equivalence. The following section borrows arguments from \cite{Note}.

From now on let $\G$ be a connected reductive group. We keep the notation as in \cite[Section 7.C]{Dat}. In particular we fix a regular embedding $ \G \hookrightarrow \Gtilde$. We let $\Ltilde= \Levi \mathrm{Z}(\Gtilde)$ and $\tilde{\mathbf{N}}=\mathbf{N} \Ltilde$.

\begin{proposition}\label{Morita}
Suppose that the $\mathcal{O}\G^F$-$\mathcal{O} \Levi^F$-bimodule $H_c^d(\Y_\U^\G,\mathcal{O}) e_s^{\Levi^F}$ extends to an $\mathcal{O}\G^F$-$\mathcal{O} \N^F$-bimodule $M'$. Then the bimodule $M'$ induces a Morita equivalence between $\mathcal{O} \N^F e_s^{\Levi^F}$ and $\mathcal{O} \G^F e_s^{\G^F}$.
\end{proposition}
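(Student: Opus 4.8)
The plan is to bootstrap from the result of Bonnafé--Dat--Rouquier (\cite[Theorem 7.7]{Dat}) applied to the regularly embedded group $\Gtilde$, where the relevant quotient $\tilde{\mathbf{N}}^F/\Ltilde^F$ is automatically cyclic, and then descend along the central extension $1 \to \G^F \to \Gtilde^F \to \Gtilde^F/\G^F \to 1$. First I would recall that $\Gtilde^F/\G^F$ is abelian and that $H_c^d(\Y_\U^{\Gtilde}, \mathcal{O}) e_s^{\Ltilde^F}$ restricts to $H_c^d(\Y_\U^{\G}, \mathcal{O}) e_s^{\Levi^F}$ (up to the identification of Deligne--Lusztig varieties for $\G$ and $\Gtilde$ coming from $\Y_\U^{\Gtilde} \cong \Y_\U^\G \times_{\Levi^F} \Ltilde^F$, as in \cite[Section 7.C]{Dat}). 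Since $\tilde{\mathbf{N}}^F/\Ltilde^F$ is cyclic, Lemma \ref{assumption} (or \cite[Lemma 10.2.13]{Rouquier3}) gives an extension of the $\Gtilde$-bimodule to an $\mathcal{O}\Gtilde^F$-$\mathcal{O}\tilde{\mathbf{N}}^F$-bimodule $\tilde{M}'$, and \cite[Theorem 7.7]{Dat} tells us $\tilde{M}'$ induces a Morita equivalence between $\mathcal{O}\tilde{\mathbf{N}}^F e_s^{\Ltilde^F}$ and $\mathcal{O}\Gtilde^F e_s^{\Gtilde^F}$.

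Next I would relate $M'$ to $\tilde{M}'$. The two extensions need not agree on the nose, but both are extensions of the same $\G^F$-$\N^F$-bimodule; twisting $\tilde{M}'$ by a linear character of $\tilde{\mathbf{N}}^F/\N^F\Ltilde^F$ if necessary, I can arrange $\mathrm{Res}^{\Gtilde^F \times (\tilde{\mathbf{N}}^F)^{\mathrm{opp}}}_{\G^F \times (\N^F)^{\mathrm{opp}}} \tilde{M}' \cong M'$ up to a summand; more precisely $\mathrm{Res}\,\tilde{M}'$ is a direct sum of $\Gtilde^F/\G^F$-conjugates (equivalently $\mathrm{Z}(\Gtilde)^F$-twists) of $M'$, and since $e_s^{\Ltilde^F}$ restricts to $\sum_{z} e_{sz}^{\Levi^F}$ over the relevant orbit of $s$ under $\mathrm{Z}^\ast$, the summand of $\mathrm{Res}\,\tilde{M}'$ lying in the $e_s^{\Levi^F}$-$e_s^{\G^F}$ part is exactly $M'$. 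The key point is then a descent lemma: if $\tilde{M}'$ induces a Morita equivalence $\mathcal{O}\tilde{\mathbf{N}}^F e_s^{\Ltilde^F} \simeq \mathcal{O}\Gtilde^F e_s^{\Gtilde^F}$ that is compatible with the $\Gtilde^F/\G^F$-gradings on both sides (the grading on $\mathcal{O}\Gtilde^F$ induced by $\Gtilde^F/\G^F$, and the one on $\mathcal{O}\tilde{\mathbf{N}}^F$ induced by $\tilde{\mathbf{N}}^F/\N^F$), then the degree-zero (i.e. $\G^F$-, resp. $\N^F$-) components give a Morita equivalence $\mathcal{O}\N^F e_s^{\Levi^F} \simeq \mathcal{O}\G^F e_s^{\G^F}$, realised by the degree-zero component $M'$ of the bimodule. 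This is a standard ``Morita equivalence of crossed products restricts to Morita equivalence of the base rings'' argument, and one checks that $M' \otimes_{\mathcal{O}\Levi^F e_s} -$ and its adjoint are mutually inverse by passing through the graded equivalence.

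Finally I would verify the two technical compatibilities this argument needs. One is that $\tilde{M}'$ is a graded bimodule for the evident $\Gtilde^F/\G^F$-gradings; this follows because $\tilde{M}' = H_c^d(\Y_\U^{\Gtilde},\mathcal{O})e_s^{\Ltilde^F}$ extended, and $\Gtilde^F$ acts on $\Y_\U^{\Gtilde}$ compatibly with the $\Ltilde^F$-action, so the cohomology is naturally $\Gtilde^F/\G^F$-graded with degree-zero part the $\G^F$-$\Levi^F$-bimodule, and the extension to $\tilde{\mathbf{N}}^F$ can be chosen compatibly (this is where one uses that the extension exists and is essentially unique up to the cyclic twist). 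The other is that the block idempotents match up: $e_s^{\Gtilde^F}$ restricted to $\mathcal{O}\G^F$ is a sum of $\Gtilde^F/\G^F$-conjugates of $e_s^{\G^F}$, and likewise for $e_s^{\Ltilde^F}$, so the degree-zero slice of the equivalence lands exactly in the $e_s^{\G^F}$, $e_s^{\Levi^F}$ components. The main obstacle I anticipate is making the descent rigorous: one must be careful that $M'$ --- an arbitrary chosen extension --- really is (isomorphic to) the degree-zero component of some graded extension $\tilde{M}'$, rather than a twist of it; resolving this requires tracking the action of $\mathrm{Z}(\Gtilde)^F$ (equivalently, the possible twists of $M'$ by linear characters of $\N^F/\Levi^F$) and checking that all such twists are simultaneously handled, or alternatively invoking the uniqueness of the extension modulo $\mathrm{Hom}(\N^F/\Levi^F,\mathcal{O}^\times)$ together with the fact that $H_c^d(\Y_\U^\G,\mathcal{O})e_s^{\Levi^F}$ is indecomposable as a bimodule over the relevant block (so all its extensions induce Morita equivalences simultaneously, by \cite[Theorem 7.7]{Dat} applied after the cyclic reduction, or directly).
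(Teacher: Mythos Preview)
Your overall architecture---leverage the known Morita equivalence at the $\Gtilde$-level and descend to $\G$---matches the paper's, but the descent mechanism is different, and the paper's route is cleaner precisely because it avoids the obstacle you flag at the end.

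The paper does not try to realise $M'$ as the degree-zero piece of a graded $\tilde M'$ and then invoke a crossed-product descent lemma. Instead it argues directly with endomorphism algebras over $K$. By Brou\'e's criterion \cite[Theorem~0.2]{Broue}, since $M'$ is projective on both sides and $M$ is a faithful $\mathcal{O}\G^F e_s^{\G^F}$-module, it suffices to show that the natural map $K\N^F e_s^{\Levi^F}\to\End_{K\G^F}(M'\otimes_{\mathcal O}K)$ is an isomorphism. Equality of dimensions is read off from the $\Gtilde$-level equivalence via
\[
\dim\End_{K\G^F}(M)=[\Gtilde^F:\G^F]\,\dim\End_{K\Gtilde^F}(\widetilde M),
\qquad
\dim K\tilde{\mathbf N}^F e_s^{\Levi^F}=[\tilde{\mathbf N}^F:\N^F]\,\dim K\N^F e_s^{\Levi^F},
\]
together with $\Gtilde^F/\G^F\cong\tilde{\mathbf N}^F/\N^F$. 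Injectivity is proved by a short direct computation: writing $e_s^{\Levi^F}=\sum_{n\in\N^F/\Levi^F}{}^n e$ for a central idempotent $e\in\mathrm{Z}(\mathcal{O}\Ltilde^F)$, the decomposition $\widetilde M=\bigoplus_n\widetilde M\,{}^n e$ separates the $\N^F/\Levi^F$-components of any element of $K\N^F e_s^{\Levi^F}$ acting on $M'$, so a relation $\sum_n\alpha_n\dot n=0$ on $M'$ forces each $\alpha_n=0$.

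What this buys is that the argument works uniformly for \emph{any} extension $M'$, with no need to compare it to a particular $\tilde M'$ or to track twists by linear characters of $\N^F/\Levi^F$; the only input from the $\Gtilde$-side is a dimension equality and the block decomposition of $\widetilde M$. Your graded-descent route is plausible and should go through once the matching issue is settled (and since all extensions of $M$ differ by one-dimensional twists, it would suffice to show that \emph{one} of them induces a Morita equivalence), but the paper's endomorphism-algebra computation is shorter and sidesteps that difficulty entirely.
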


\begin{proof}
Let  $M'$ be an $\mathcal{O} \G^F \times (\mathbf{N}^F)^{\mathrm{opp}}$-bimodule extending $M:=H^d_c(\Y_\U^\G, \mathcal{O}) e_s^{\Levi^F}$. Recall that $M$ is projective as $\mathcal{O} \G^F$-module and projective as $\mathcal{O} \Levi^F$-module. As $\ell \nmid [\mathbf{N}^F : \Levi^F]$ it follows that $M'$ is projective as $\mathcal{O} \mathbf{N}^F$-module. Note that $\Ind_{\G^F}^{\Gtilde^F} M$ is a faithful $\mathcal{O} \Gtilde^F e_s^{\G^F}$-module, see proof of \cite[Theorem 7.5]{Dat}. Thus, $M$ is a faithful $\mathcal{O} \G^F e_s^{\G^F}$-module.

By \cite[Theorem 0.2]{Broue} it suffices to prove that $M' \otimes_{\mathcal{O}} K$ induces a bijection between irreducible characters of $K \mathbf{N}^F e_s^{\Levi^F}$ and $K \G^F e_s^{\Levi^F}$. As $M$ is a faithful $\mathcal{O} \G^F e_s^{\G^F}$-module it suffices to prove that $\End_{K \G^F}(M' \otimes_\mathcal{O} K)\cong K \mathbf{N}^F e_s^{\Levi^F}$. As in the proof of \cite[Theorem 7.5]{Dat} let $\widetilde{M}= \Ind_{\G^F \times (\Levi^F)^{\mathrm{opp}} \Delta \Ltilde^F}^{\Gtilde^F \times (\Ltilde^F)^{\mathrm{opp}}}(M)$ be the $\mathcal{O} \Gtilde^F \times (\widetilde{\Levi}^F)^{\mathrm{opp}}$-module. We have $\Ind_{\G^F}^{\Gtilde^F} M' \cong \widetilde{M}$ as $\Gtilde^F$-modules . Since $M$ is $\Gtilde^F$-invariant this implies 
$$\mathrm{dim}(\End_{K \G^F}(M))= [\Gtilde^F: \G^F] \, \mathrm{dim}(\End_{K\Gtilde^F}(\widetilde{M})).$$
In addition, the bimodule $\widetilde{M}$ extends to an $\mathcal{O} \Gtilde^F \times (\widetilde{\mathbf{N}}^F)^{\mathrm{opp}}$-bimodule $\widetilde{M}'$, see proof of \cite[Theorem 7.5]{Dat}, which induces a Morita equivalence between $\mathcal{O} \widetilde{\G}^F e_s^{\G^F}$ and $\mathcal{O} \widetilde{\mathbf{N}}^F e_s^{\Levi^F}$. This shows that $\mathrm{dim}(\End_{K\Gtilde^F}(\widetilde{M})) = \mathrm{dim}(K \widetilde{\mathbf{N}}^F e_s^{\Levi^F})$. Moreover, we have 
$$\mathrm{dim}(K \widetilde{\mathbf{N}}^F e_s^{\Levi^F})= [\widetilde{\mathbf{N}}^F: \mathbf{N}^F] \, \mathrm{dim}(K \mathbf{N}^F e_s^{\Levi^F}).$$
From these calculations using $\Gtilde^F/ \G^F \cong \widetilde{\mathbf{N}}^F / \mathbf{N}^F$ we deduce that 
$$\mathrm{dim}(\End_{K \G^F}(M))= \mathrm{dim}(K \mathbf{N}^F e_s^{\Levi^F}).$$

\begin{lemma}
The natural map $K \mathbf{N}^F e_s^{\Levi^F} \to \End_{K \G^F}(M')$ is injective.
\end{lemma}
\begin{proof}
Let $\dot{n}$ be a representative of $n \in \mathbf{N}^F/\Levi^F$ in $\mathbf{N}^F$. 
Let $\alpha_n \in \mathcal{O} \Levi^F e_s^{\Levi^F}$, $n \in \mathbf{N}^F/\Levi^F$, such that $\sum_{n \in \mathbf{N}^F / \Levi^F} \alpha_n \dot{n}=0$ on $M'$. Let $\theta_{\dot{n}}$ be the automorphism on $\widetilde{M}$ induced by the action of $\dot{n}$. More concretely, we have 
$$\theta_{\dot{n}}((g,l) \otimes m)=(g,{}^n l) \otimes \dot{n} m$$
for $(g,l) \in \Gtilde^F \times (\Ltilde^F)^{\mathrm{opp}}$ and $m \in M$.

For $(g,l) \in \Gtilde^F \times (\Ltilde^F)^{\mathrm{opp}}$ we have $\theta_{\dot{n}} \circ (g,l) \circ \theta_{\dot{n}}^{-1}=(g,{}^n l)$ on $\widetilde{M}$. Let $e \in \mathrm{Z}(\mathcal{O} \Ltilde^F)$ be the central idempotent as in \cite[Theorem 7.5]{Dat} such that $e_s^{\Levi^F}= \sum_{n \in \mathbf{N}^F/ \Levi^F } {}^n e$. We have
$$\widetilde{M}=\displaystyle \bigoplus_{n\in \mathbf{N}^F/\Levi^F} \widetilde{M} {}^n e.$$
For $\widetilde{m} \in \widetilde{M}e$ we therefore have
$$\sum_{n\in  \mathbf{N}^F / \Levi^F} \alpha_n \theta_{\dot{n}}(\widetilde{m})=0.$$
As $\theta_{\dot{n}}(\widetilde{m}) \in \widetilde{M} {}^n e$ we have $\alpha_{n} \theta_{\dot{n}}(\widetilde{m})=0$ for all $n \in \mathbf{N}^F/\Levi^F$ and $\widetilde{m} \in \widetilde{M}e$. This means that $\alpha_{n} \theta_{\dot{n}}$ vanishes on $\widetilde{M} e$. Composing with $\theta_{\dot{y}}^{-1}$ for $y \in \mathbf{N}^F/ \Levi^F$ shows that $\alpha_{n} \theta_{\dot{n}}$ vanishes on $\widetilde{M} {}^y e$ as well. We conclude that $\alpha_n \theta_n=0$ on $\widetilde{M}$. As $\theta_{\dot{n}}$ is an isomorphism we must have $\alpha_n=0$ on $\widetilde{M}$ and therefore, $\alpha_n=0$ on $M$. As $M$ is faithful as $K \Levi^F e_s^{\Levi^F}$-module it follows that $K \mathbf{N}^F e_s^{\Levi^F} \to \End_{K \G^F}(M')$ is injective.
\end{proof}
Now let us finish the proof of Proposition \ref{Morita}. Since $\mathrm{dim}(\End_{K \G^F}(M))= \mathrm{dim}(K \mathbf{N}^F e_s^{\Levi^F})$ it follows that the natural map $$K \mathbf{N}^F e_s^{\Levi^F} \to \End_{K \G^F}(M')$$ is an isomorphism. Thus, the bimodule $M' \otimes_{\mathcal{O}} K$ induces a Morita equivalence between $K \mathbf{N}^F e_s^{\Levi^F}$ and $K \G^F e_s^{\Levi^F}$. As we have argued above this implies that $M'$ induces a Morita equivalence between $\mathcal{O} \N^F e_s^{\Levi^F}$ and $\mathcal{O} \G^F e_s^{\G^F}$.
\end{proof}

The previous statement is now sufficient to prove Theorem \ref{final}:

\begin{proof}[Proof of Theorem \ref{final}] The quotient $\mathbf{N}^F/ \Levi^F$ is of $\ell'$-order and embeds into $\mathrm{Z}(\G)^F$, see for instance \cite[Lemma 13.16(i)]{MarcBook}. Thus, the quotient $\mathbf{N}^F / \Levi^F$ is cyclic of $\ell'$-order unless possibly if $\G$ is simply connected and $\G^F$ is of untwisted type $D_{n}$, $n$ even. If $\mathbf{N}^F/ \Levi^F$ is cyclic (of $\ell'$-order) then it follows by \cite[Lemma 10.2.13]{Rouquier3} that the $\mathcal{O} \G^F$-$\mathcal{O} \Levi^F$-bimodule $H_c^d(\Y_\U,\mathcal{O}) e_s^{\Levi^F}$ extends to an $\mathcal{O} \G^F$-$\mathcal{O}\N^F$-bimodule. In the remaining cases Proposition \ref{ext} asserts that the bimodule $H_c^d(\Y_\U^\G,\mathcal{O}) e_s^{\Levi^F}$ extends to an $\mathcal{O}\G^F$-$\mathcal{O} \N^F$-bimodule.

By Proposition \ref{Morita} the extended bimodule induces a Morita equivalence between $\mathcal{O} \N^F e_s^{\Levi^F}$ and $\mathcal{O} \G^F e_s^{\G^F}$. In particular, one observes that the conclusion of \cite[Theorem 7.6]{Dat} and therefore of \cite[Theorem 7.7]{Dat} holds true in this case. This proves Theorem \ref{final}.
\end{proof}

\printindex

\bibliographystyle{plain}

\end{document}